\def \RN {\mathbb{R}^N}
\def \N {\mathbb{N}}
\def \R {\mathbb{R}}
\def \dsy {\displaystyle}
\def \de {\partial}
\def \LL {\mathcal{L}}
\def \bbX {\mathbb{X}}
\newcommand{\scal}[2]{\langle {#1} , {#2}\rangle}
\theoremstyle{definition}
\newtheorem{definition}{Definition}[section]
\theoremstyle{plain}
\newtheorem{theorem}[definition]{Theorem}
\newtheorem{proposition}[definition]{Proposition}
\newtheorem{lemma}[definition]{Lemma}
\numberwithin{equation}{section}
\begin{document}

 \title[Mixed local and nonlocal operators]{Semilinear elliptic equations \\
involving mixed local and nonlocal operators}
 \date{\today}
 
\author[S.\,Biagi]{Stefano Biagi}
\author[S.\,Dipierro]{Serena Dipierro}
\author[E.\,Valdinoci]{Enrico Valdinoci}
\author[E.\,Vecchi]{Eugenio Vecchi}

\address[S.\,Biagi]{Politecnico di Milano - Dipartimento di Matematica
\newline\indent
Via Bonardi 9, 20133 Milano, Italy}
\email{stefano.biagi@polimi.it}

\address[S.\,Dipierro]{Department of Mathematics and Statistics
\newline\indent University of Western Australia \newline\indent
35 Stirling Highway, WA 6009 Crawley, Australia}
\email{serena.dipierro@uwa.edu.au}

\address[E.\,Valdinoci]{Department of Mathematics and Statistics
\newline\indent University of Western Australia \newline\indent
35 Stirling Highway, WA 6009 Crawley, Australia}
\email{enrico.valdinoci@uwa.edu.au}

\address[E.\,Vecchi]{Politecnico di Milano - Dipartimento di Matematica
\newline\indent
Via Bonardi 9, 20133 Milano, Italy}
\email{eugenio.vecchi@polimi.it}

\subjclass[2010]{35A01, 35B65, 35R11}

\keywords{Operators of mixed order, existence, symmetry, moving plane,
qualitative properties of solutions}

\thanks{The authors are members of INdAM. S. Biagi
is partially supported by the INdAM-GNAMPA project
\emph{Metodi topologici per problemi al contorno associati a certe
classi di equazioni alle derivate parziali}.
S. Dipierro and E. Valdinoci are members of AustMS and
are supported by the Australian Research Council
Discovery Project DP170104880 NEW ``Nonlocal Equations at Work''.
S. Dipierro is supported by
the Australian Research Council DECRA DE180100957
``PDEs, free boundaries and applications''. E. Vecchi is partially supported
by the INdAM-GNAMPA project
\emph{Convergenze variazionali per funzionali e operatori dipendenti da campi vettoriali}}

 \begin{abstract}
In this paper, we consider an elliptic operator obtained as the superposition
of a classical second-order differential operator and a nonlocal operator of fractional type.
Though the methods that we develop
are quite general, for concreteness we focus on the case in which the
operator takes the form~$-\Delta+(-\Delta)^s$, with~$s\in(0,1)$.
We focus here on symmetry properties of the solutions
and we prove a radial symmetry result, based on the moving plane method,
and a one-dimensional symmetry result,
related to a classical conjecture by G.W. Gibbons.
 \end{abstract}
 
 \maketitle
 
\section{Introduction}

In this article we discuss some symmetry properties for the solutions
of se\-mi\-li\-near equations
driven by a mixed operator.
Specifically, we will consider o\-pe\-ra\-tors that combine local and nonlocal
features. For the sake of concreteness, we focus on operators of the form
\begin{equation}\label{OPER}
\LL: =- \Delta +(-\Delta)^s\end{equation}
where $s\in (0,1)$ and
$$(-\Delta)^s u(x) := \mathrm{P.V.}\int_{\RN}\frac{u(x)-u(y)}{|x-y|^{N+2s}}\,d y.$$
The study of mixed operators has a consolidated
interest in the recent literature, both in terms of
theoretical studies and in view of real-world applications.
The development of the theory includes,
among the others,
viscosity solutions methods (see~\cite{JAK1, JAK2, MR2422079, OHKSBDc3847t8567, CIOM, BBGG-1, BBGG-2}),
parabolic equations (see~\cite{VES}),
Aubry-Mather theory (see~\cite{LL}),
Cahn-Hilliard equations
(see~\cite{6AG}),
porous medium equations (see~\cite{TESO})
phase transitions (see~\cite{CASE}),
fractional damping effects (see~\cite{PATA}),
Bernstein-type regularity results (see~\cite{BERN}),
existence and non-existence results (see~\cite{NIC-MAT, XAVI}),
regularity theory (see~\cite{BDVV}).
Concrete applications of mixed
operators also arise naturally
in plasma physics (see~\cite{PLA})
and population dynamics (see~\cite{EDO}),
and numerical methods
have been also developed to take into account
the specifics of mixed operators (see~\cite{N97564I}).\medskip

In this article, we provide two sets of symmetry results
for solutions
of semilinar equations driven by mixed operators:
the first type of results deals with the radial symmetry of the solutions,
and relies on the moving plane method; the
second type of results is inspired by a classical conjecture by G.W. Gibbons
and establishes the one-dimensional symmetry of the global solutions
that attain uniformly their limit values at infinity.

In this spirit, the first symmetry result that we present is as follows:

\begin{theorem}\label{Simmetria}
Let $f: \mathbb{R} \to \mathbb{R}$ be
a locally Lipschitz continuous function,
and let $\Omega\subset \mathbb{R}^{N}$ be an open and bounded set
with smooth boundary.
We assume that $\Omega$ is symmetric and
convex with respect to the hyperplane $\{ x_1 = 0\}$. 

If $u\in C(\R^N)$ is any weak solution of
\begin{equation}\label{SymmetryProblem}
\begin{cases}
\LL u = f(u) & \textrm{in $\Omega$},\\
u \equiv 0  & \textrm{in $\mathbb{R}^{N} \setminus \Omega$},\\
u> 0  & \textrm{in $\Omega$},
\end{cases}
\end{equation}
then $u$ is symmetric
with respect to $\{x_1 = 0\}$ and increasing in the $x_1$-direction in $\Omega \cap \{x_1 <0\}$.
\end{theorem}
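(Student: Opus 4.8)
\section*{Proof proposal}

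The plan is to run the moving plane method in the $x_1$-direction. Set $-a:=\inf_{x\in\Omega}x_1$; for $\lambda\in(-a,0]$ write $T_\lambda:=\{x_1=\lambda\}$, let $x^\lambda:=(2\lambda-x_1,x_2,\dots,x_N)$ be the reflection of $x$ across $T_\lambda$, put $\Sigma_\lambda:=\Omega\cap\{x_1<\lambda\}$, and define on $\R^N$
\[
u_\lambda(x):=u(x^\lambda),\qquad w_\lambda(x):=u_\lambda(x)-u(x).
\]
Since $\Omega$ is symmetric and convex with respect to $\{x_1=0\}$, an elementary computation shows that the reflection of $\Sigma_\lambda$ across $T_\lambda$ is contained in $\Omega$ whenever $\lambda\le 0$; hence $u_\lambda$ is well defined on $\Sigma_\lambda$ and solves there the same equation as $u$, because both $-\Delta$ and $(-\Delta)^s$ commute with the isometric reflection $R_\lambda x:=x^\lambda$ (for the nonlocal term this uses the invariance of the kernel $|x-y|^{-N-2s}$ under $R_\lambda$). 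Consequently $w_\lambda$ is a weak solution of $\LL w_\lambda=c_\lambda(x)\,w_\lambda$ in $\Sigma_\lambda$, where $c_\lambda(x):=\big(f(u_\lambda(x))-f(u(x))\big)/w_\lambda(x)$, set to $0$ where $w_\lambda=0$, is bounded in absolute value by the Lipschitz constant $L$ of $f$ on $[0,\max_{\R^N}u]$. Moreover $w_\lambda$ is antisymmetric across $T_\lambda$, and $w_\lambda\ge 0$ on $\{x_1<\lambda\}\setminus\Sigma_\lambda$, since there $u$ vanishes while $u_\lambda\ge 0$.

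The core step is to prove that $w_\lambda\ge 0$ in $\Sigma_\lambda$ for every $\lambda\in(-a,0]$. This rests on a maximum principle for antisymmetric functions in sets of small measure, tailored to $\LL$: there is $\delta>0$, depending only on $N$, $s$ and $L$, such that if $D\subset\{x_1<\lambda\}$ is open with $|D|<\delta$, $w$ is antisymmetric across $T_\lambda$, $\LL w\ge c(x)w$ weakly in $D$ with $\|c\|_{L^\infty}\le L$, and $w\ge 0$ on $\{x_1<\lambda\}\setminus D$, then $w\ge 0$ in $D$. Granting this, since $|\Sigma_\lambda|\to 0$ as $\lambda\to(-a)^+$ we get $w_\lambda\ge 0$ in $\Sigma_\lambda$ for $\lambda$ near $-a$; let $\lambda_0$ be the supremum of those $\mu\in(-a,0]$ for which $w_\nu\ge 0$ in $\Sigma_\nu$ for every $\nu\in(-a,\mu]$. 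By continuity $w_{\lambda_0}\ge 0$ in $\Sigma_{\lambda_0}$. If $\lambda_0<0$, then $w_{\lambda_0}\not\equiv 0$ — otherwise $u$, hence $\{u>0\}=\Omega$, would be symmetric across $T_{\lambda_0}$ and across $\{x_1=0\}$, so invariant under translation by $2\lambda_0$ in the $x_1$-direction, impossible for a bounded set — and the strong maximum principle for antisymmetric solutions gives $w_{\lambda_0}>0$ in $\operatorname{int}\Sigma_{\lambda_0}$. Choosing a compact $K\subset\operatorname{int}\Sigma_{\lambda_0}$ with $|\Sigma_{\lambda_0}\setminus K|<\delta/2$, for $\lambda>\lambda_0$ sufficiently close one has $w_\lambda>0$ on $K$ and $|\Sigma_\lambda\setminus K|<\delta$; applying the small-measure principle on $D:=\Sigma_\lambda\setminus K$ — note $w_\lambda\ge 0$ on $\{x_1<\lambda\}\setminus D=K\cup(\{x_1<\lambda\}\setminus\Omega)$ — yields $w_\lambda\ge 0$ in $\Sigma_\lambda$, contradicting the maximality of $\lambda_0$. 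Hence $\lambda_0=0$.

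It remains to conclude. From $w_0\ge 0$ we obtain $u(-x_1,x')\ge u(x_1,x')$ on $\Omega\cap\{x_1<0\}$; the same argument applied to $\tilde u(x):=u(-x_1,x')$, which solves the identical problem because $\Omega$ is symmetric across $\{x_1=0\}$, gives the reverse inequality, so $u$ is symmetric across $\{x_1=0\}$. For the strict monotonicity, fix $\lambda\in(-a,0)$: then $w_\lambda\ge 0$ in $\Sigma_\lambda$ and, as above, $w_\lambda\not\equiv 0$, so the strong maximum principle for antisymmetric solutions gives $w_\lambda>0$ in $\operatorname{int}\Sigma_\lambda$; given $(x_1,x'),(y_1,x')\in\Omega$ with $x_1<y_1<0$ and $\lambda:=(x_1+y_1)/2<0$, one has $x\in\Sigma_\lambda$, $x^\lambda=(y_1,x')$, whence $u(y_1,x')-u(x_1,x')=w_\lambda(x)>0$. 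Thus $u$ is increasing in $x_1$ on $\Omega\cap\{x_1<0\}$.

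The main obstacle is exactly this small-measure maximum principle for antisymmetric functions, which is where the nonlocality must be handled. Its heart is the observation that, for $w$ antisymmetric across $T_\lambda$ and nonnegative on $\{x_1<\lambda\}\setminus D$, the operator $(-\Delta)^s$ acts on $w$ inside $D$ with a favourable sign: the contribution of the reflected half-space $\{y_1>\lambda\}$ can be rewritten, using $w(y^\lambda)=-w(y)$ and the inequality $|x-y|<|x-y^\lambda|$ valid for $x,y$ on the same side of $T_\lambda$, as an extra term in the associated quadratic form that has the right (coercive) sign. Combining this with the sign of the local part $-\Delta$ and the classical Faber--Krahn / Berestycki--Nirenberg lower bound for the first Dirichlet eigenvalue on sets of small measure yields the principle. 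Carrying this out rigorously in the weak framework, for the merely continuous solution $u$, and keeping the local and nonlocal contributions correctly balanced, is the delicate point; the remainder is a standard adaptation of the Gidas--Ni--Nirenberg and Berestycki--Nirenberg moving-plane schemes.
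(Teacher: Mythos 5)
Your proposal follows the same overall structure as the paper's proof: moving plane in the $x_1$-direction, a small-volume maximum principle for antisymmetric functions, and a strong maximum principle for antisymmetric supersolutions. The reflection/sign trick for the kernel $|x-y|^{-N-2s}$ that you single out as the heart of the small-measure principle is exactly the content of Lemma~\ref{wammissibile}, which feeds into Proposition~\ref{PreSMP}; the strong maximum principle is Proposition~\ref{StrongMax}; and your translation argument showing $w_{\lambda_0}\not\equiv 0$ correctly fills in a step the paper leaves implicit when it invokes Proposition~\ref{StrongMax} to conclude $v_{\overline{\lambda}}>0$. The one substantive divergence is in how the two measure-theoretic steps are closed. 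You apply the small-measure principle directly to $D=\Sigma_\lambda$ and $D=\Sigma_\lambda\setminus K$, whose closures touch the hyperplane $\{x_1=\lambda\}$. The paper's Proposition~\ref{PreSMP} and Lemma~\ref{wammissibile}, however, require $\overline{U}$ to be compactly contained in the open half-space $H$, and they are only ever invoked on small balls well inside $H$, namely in the proof of Proposition~\ref{StrongMax}. For the measure steps the paper instead runs the integral form of the moving plane from~\cite{MPS}: it tests the equation against the patched function built from $(u-u_\lambda)^{\pm}$ on $\Sigma_\lambda$ and its complement, uses the continuity of $u$ and the identity $u=u_\lambda$ on $\{x_1=\lambda\}$ to place this test function in $H^1(\RN)$, eliminates the fractional contribution with formula~(3.9) of~\cite{MPS}, and closes with the small-measure Poincar\'e inequality of~\cite{BVV}. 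This is, in effect, a proof of your small-measure principle tailored to the set actually needed, and the admissibility of the test function across $\partial H$ is precisely where the hypothesis $u\in C(\RN)$ enters. If you prefer to quote the small-measure principle as a black box, you first need to extend Lemma~\ref{wammissibile}/Proposition~\ref{PreSMP} to sets $U$ with $\overline{U}\cap\partial H\neq\varnothing$, for which the same continuity argument is indispensable.
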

As usual, from Theorem \ref{Simmetria}
one deduces that if~$\Omega$ is a ball, then the solutions of~\eqref{SymmetryProblem}
are necessarily radial and radially decreasing.\medskip

The proof of Theorem~\ref{Simmetria} that we present
combines the integral formulation of the moving plane method
(see \cite{MPS, Sciunzi}) with 
suitable adaptations of some results in \cite{JW}, where the case of
integral equations was taken into account
by introducing
a new small-volume maximum principle and a strong maximum principle for 
an\-ti\-sym\-me\-tric supersolutions.
See also~\cite{339, JAWE, DMPS, MR3713542, MR3827344, MR3937999}
for related moving pla\-ne me\-thods in the nonlocal setting.\medskip

In terms of one-dimensional symmetry for global solutions
under uniform limit assumptions,
we have the following result:
\begin{theorem} \label{thm.Gibbons}
  Let $f\in C^1(\R)$ be such that
  \begin{equation} \label{eq.assumptionf}
   \sup_{|r|\geq 1}f'(r) < 0.
  \end{equation}
  Let $u\in C^3(\R^N)\cap W^{4,\infty}(\R^N)$
  be a solution of the problem
  \begin{equation} \label{eq.GibbonsPb}
   \begin{cases}
  \LL u = f(u) & \text{in $\RN$}, \\
  \dsy\lim_{t\to\pm\infty}u(y,t) = \pm 1 & 
  \text{uniformly for $y\in\R^{N-1}$}.
  \end{cases}
  \end{equation}
  Then,
  there exists $u_0:\R\to\R$ such that
  \begin{equation} \label{eq.onedim}
   u(y,t) = u_0(t) \qquad\text{for every $x = (y,t)\in\RN$}.
  \end{equation}
 \end{theorem}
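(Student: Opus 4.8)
The plan is to run a sliding argument in the tangential variables. Since $\LL$ is translation--invariant and the limit condition in~\eqref{eq.GibbonsPb} is unaffected by translations in $y$, it suffices to show that $u$ is invariant under $y\mapsto y+e$ for every unit vector $e\in\R^{N-1}$; then~\eqref{eq.onedim} holds with $u_0(t):=u(0,t)$. Fix such an $e$, write $u^{\tau}(y,t):=u(y+\tau e,t)$ --- again a solution of~\eqref{eq.GibbonsPb} --- and note that, replacing $e$ by $-e$, it is enough to prove $u^{\tau}\le u$ in $\RN$ for every $\tau>0$. As preliminary book--keeping, set $-2\gamma:=\sup_{|r|\ge1}f'(r)<0$; by continuity of $f'$ there is $\delta_0\in(0,1)$ with $f'\le-\gamma$ on $\{|r|\ge1-\delta_0\}$, by~\eqref{eq.GibbonsPb} there is $M>0$ with $u(y,t)\ge1-\delta_0$ for $t\ge M$ and $u(y,t)\le-1+\delta_0$ for $t\le-M$ uniformly in $y$, and $C_1:=\|\nabla u\|_{L^\infty(\RN)}<\infty$ by assumption.

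Let $\Lambda:=\{\tau\ge0:\ u^{\sigma}\le u\text{ in }\RN\text{ for all }\sigma\in[0,\tau]\}$. Then $0\in\Lambda$, $\Lambda$ is an interval, and it is closed because inequalities survive passing to the limit; hence it suffices to produce $\sigma_0>0$, independent of $\tau_0$, such that $\tau_0\in\Lambda$ implies $\tau_0+\sigma_0\in\Lambda$. So let $\tau_0\in\Lambda$ and $w_0:=u-u^{\tau_0}\ge0$: it solves $\LL w_0=c_0\,w_0$ with $c_0:=\int_0^1 f'(sw_0+u^{\tau_0})\,ds$ bounded, and by the strong maximum principle --- at a would--be interior zero of $w_0$ the term $-\Delta w_0$ is $\le0$ while $(-\Delta)^sw_0$ is $<0$ unless $w_0\equiv0$, so that $\LL w_0<0$ there whereas $\LL w_0=c_0w_0=0$ --- either $w_0>0$ in $\RN$, or $w_0\equiv0$; in the latter case $u^{\tau_0}=u$, and together with $[0,\tau_0]\subseteq\Lambda$ this forces $u^{\sigma}=u$ for all $\sigma\in[0,\tau_0]$ and hence, by periodicity, for all $\sigma$, and we are done. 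Assume then $w_0>0$. For $\sigma>0$ set $v:=u-u^{\tau_0+\sigma}=w_0-(u^{\tau_0+\sigma}-u^{\tau_0})$: it is bounded, $v(y,t)\to0$ as $t\to\pm\infty$ uniformly in $y$, $v\ge-\sigma C_1$, and $\LL v=c\,v$ with $c:=\int_0^1 f'(u-sv)\,ds$ satisfying $c\le-\gamma$ on $\{|t|\ge M\}$ --- and this last point holds \emph{for every} value of $\tau_0+\sigma$, precisely because $e$ is tangential, so that also $u^{\tau_0+\sigma}(y,t)\ge1-\delta_0$ for $t\ge M$ and $\le-1+\delta_0$ for $t\le-M$. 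The heart of the matter is the following comparison statement: \emph{there is $\sigma_0>0$, depending only on $f$, $M$ and $C_1$, such that every $v$ as above with $0<\sigma\le\sigma_0$ satisfies $v\ge0$ in $\RN$.} Granting it, $u^{\tau_0+\sigma}\le u$ for all $\sigma\in(0,\sigma_0]$, so $[0,\tau_0+\sigma_0]\subseteq\Lambda$; since $\sigma_0$ does not depend on $\tau_0$, iteration yields $\Lambda=[0,\infty)$, i.e. $u^{\tau}\le u$ for all $\tau>0$, which by the initial reduction would complete the proof. It thus remains to establish the comparison statement.

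This is where the real difficulty lies, and it is the analogue here of the classical obstruction in the Gibbons problem. Suppose $\inf_{\RN}v=-\mu<0$. Since $v\to0$ as $t\to\pm\infty$ uniformly in $y$, the infimum is approached inside a slab $\{|t|\le M_1\}$ with $M_1$ depending only on $\mu$ and on the modulus of convergence in~\eqref{eq.GibbonsPb}; translating in $y$ and using interior estimates for $\LL$ together with the boundedness of $u$, one extracts a limiting profile $V$, bounded, with $V\to0$ as $t\to\pm\infty$, solving $\LL V=\bar c\,V$ with $\bar c\le-\gamma$ on $\{|t|\ge M\}$, and attaining its minimum $-\mu$ at a point $x_0=(0,t_0)$ with $|t_0|\le M_1$. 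If $|t_0|\ge M$ we get an immediate contradiction: at the interior minimum $x_0$ one has $-\Delta V(x_0)\le0$ and $(-\Delta)^sV(x_0)<0$ (strict since $V\not\equiv-\mu$), hence $\LL V(x_0)<0$, while $\LL V(x_0)=\bar c(x_0)\,V(x_0)\ge\gamma\mu>0$. The genuinely delicate case is $|t_0|<M$, where $f'(u)$ --- and so $\bar c$ --- need not be negative. Here I would bring in the extra structure accumulated above: passing to the limit in $w_0\ge0$ and in $u^{\tau_0+\sigma}-u^{\tau_0}$ along the same sequence writes $V$ as a nonnegative function minus a $C^0$--small perturbation, and a maximum--principle analysis confined to the slab --- in the spirit of the small--volume maximum principle and of the strong maximum principle for antisymmetric supersolutions of~\cite{JW}, and of the integral moving--plane technique of~\cite{MPS,Sciunzi}, adapted to the mixed operator $\LL$ --- forces the limiting translates of $u$ and of $u^{\tau_0}$ to coincide near $x_0$; combined with the equation satisfied by $V$, the sign of $\bar c$ on $\{|t|\ge M\}$ and the uniform limits in~\eqref{eq.GibbonsPb}, this contradicts $\inf V=-\mu<0$. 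This last step is the one I expect to be the main obstacle, and where most of the technical work of adapting the machinery of~\cite{JW} to operators of mixed order has to be carried out.
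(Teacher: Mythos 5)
The paper's proof is essentially modular: it verifies the abstract hypotheses (H1)--(H6) from Farina--Valdinoci~\cite{FarVal} (Lemmata~\ref{lem.H1}--\ref{lem.H46}) and then applies Theorem~1.1 of~\cite{FarVal} directly. Your proposal instead tries to rediscover the Gibbons argument from scratch by a sliding procedure in the purely tangential directions $e\in\R^{N-1}$. This is a genuinely different route, and it has a structural flaw that is independent of the missing technical details: the uniform limits in~\eqref{eq.GibbonsPb} are limits in the $t$-variable, and they give no starting point for a sliding in $y$. In the classical Gibbons proofs one slides $u(\cdot+\tau\nu)$ along directions $\nu$ with $\nu\cdot e_N>0$, because only then does $u(\cdot+\tau\nu)\geq u$ hold for $\tau$ large (this is precisely where the limits are used); the purely tangential case $\nu\cdot e_N=0$ is then recovered by continuity in $\nu$. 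Your set $\Lambda$ contains only $\tau=0$ at the outset, and at $\tau_0=0$ your dichotomy degenerates: $w_0\equiv0$ there, the ``periodicity'' conclusion is vacuous since the period is $0$, and the comparison statement is not applicable since you required $w_0>0$. So the induction never gets off the ground.

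Beyond this, the comparison statement you isolate --- that $v=u-u^{\tau_0+\sigma}\geq0$ for $\sigma$ small, \emph{uniformly in} $\tau_0$ --- is unproved and, as stated, is too strong. The only information in the slab $\{|t|\leq M\}$ is $w_0>0$ (pointwise, not bounded below) and $|u^{\tau_0+\sigma}-u^{\tau_0}|\leq\sigma C_1$, and the coefficient $c$ has no favourable sign there; nothing prevents $w_0$ from tending to $0$ along a sequence in the slab, in which case $v$ could be negative however small $\sigma$ is. The sketch you give for this case (``forces the limiting translates of $u$ and of $u^{\tau_0}$ to coincide near $x_0$'') is an assertion, not an argument, and you yourself flag it as ``the main obstacle.'' What is actually missing is exactly the content of the abstract machinery of~\cite{FarVal}: a maximum principle in the strip complement (your Lemma analogue is (H5)/(H3), which the paper proves in Lemma~\ref{lem.H35} by a translation-compactness and evaluation-at-a-minimum argument), a strong maximum principle for the linearized and for the nonlinear difference ((H4)/(H6), paper's Lemma~\ref{lem.H46}), and a compactness statement for translates ((H2), paper's Lemma~\ref{lem.H2}). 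Once these are in place, \cite{FarVal}~Theorem~1.1 packages the sliding in the correct (non-tangential) directions and handles the slab; re-deriving that package by hand is a substantial piece of work that the proposal does not carry out.
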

The result in Theorem~\ref{thm.Gibbons}
is inspired by a classical conjecture
by G.W. Gib\-bons, formulated when~$\LL$ was
the classical Laplace operator
and motivated by the cosmological problem
of detecting the shape of the interfaces which ``separate'' the different regions of
the universe after the big bang
(see~\cite{GIT}).

The classical Gibbons
conjecture was established,
independently and with different methods, by~\cite{BAB, BEB, FAC}.
See also~\cite{689, 468, 579} for related results.

The fractional version of Gibbons conjecture (i.e., the case
in which the o\-pe\-ra\-tor in~\eqref{eq.GibbonsPb} is the fractional
Laplacian)
has been established in~\cite{FarVal, CASI}.
As a matter of fact, the method developed in~\cite{FarVal}
is very general and comprises a number of different
operators in a unified way: for this, our proof of
Theorem~\ref{thm.Gibbons} will rely on the general structure
provided in~\cite{FarVal} by showing that the structural
hypothesis of~\cite{FarVal} are fulfilled in the case
that we consider here.
\medskip

In the rest of the paper we provide the proof of Theorem~\ref{Simmetria},
which is contained in Section~\ref{6-J},
and that of Theorem~\ref{thm.Gibbons}, which is contained
in Section~\ref{sec.main}.

\section{Radial symmetry and proof of Theorem~\ref{Simmetria}}\label{6-J}
In this section, we prove Theorem~\ref{Simmetria}.
To this end, without loss of generality,
we may assume that
$$\inf_{x \in \Omega} x_1 = -1.$$
We will combine the integral version
of the moving plane method (see \cite{MPS}) with a suitable generalization of a 
strong maximum principle for {\it antisymmetric supersolutions}
(see~\cite{JW}). \smallskip

Let us now introduce and fix some notation needed in what follows.
We define the bilinear form
\begin{equation}\label{BILI}
 \begin{split}
  B(u,v):=\;& \int_{\mathbb{R}^{N}} \scal{\nabla u}{\nabla v} \, dx  \\
  &\qquad+ \iint_{\mathbb{R}^{2N}} \dfrac{(u(x)-u(y))(v(x)-v(y))}{|x-y|^{N+2s}}\, dx \, dy,
  \end{split}\end{equation}
 and the function space
	\begin{equation}\label{BILI2}
	\mathcal{D}(\Omega):= \left\{ u\in H^{1}(\mathbb{R}^{N}) \, \textrm{ s.t. } 
	u \equiv 0 \textrm{ in } \mathbb{R}^{N} \setminus \Omega \right\}.
	\end{equation}
In this setting, we give the following definition of weak solution
of~\eqref{SymmetryProblem}:
\begin{definition}\label{def:uweak}
We say that a function $u:\Omega\to\R$ is a weak solution of~\eqref{SymmetryProblem}
if~$u\in \mathcal{D}(\Omega)$, $u>0$ in~$\Omega$, and
\begin{equation}\label{ADMI}
B(u,\varphi)=\int_{\R^N} f(u(x))\varphi(x)\,dx,\end{equation}
for any~$\varphi\in \mathcal{D}(\Omega)$.
\end{definition}
Also, given a set $U \subset \mathbb{R}^{N}$,
we let
 \begin{equation}\label{FURHO}
  \rho(v,U) := \int_{U}|\nabla v|^2 + [v]^{2}_{H^{s}(U)},
 \end{equation}
where
$$ [v]^{2}_{H^{s}(U)}:=\iint_{U \times U} 
\dfrac{|v(x)-v(y)|^2}{|x-y|^{N+2s}}\,dx\,dy,$$
and
\begin{equation}\label{HUDEF}
\mathcal{H}(U):= \left\{ v\in L^{2}(\mathbb{R}^{N})\,\,\text{s.t.\,$v\in H^{1}(U)$}\right\}.
\end{equation}
As customary, for any $v \in L^{2}(\mathbb{R}^{N})$ we define the
positive and negative parts of~$v$ as follows
\begin{equation*}
 v^{+} := \max \{ v, 0\}\qquad{\mbox{and}}\qquad
 v^{-}:= \max\{-v,0 \}.
\end{equation*}
As it is well known, 
\begin{equation}\label{fact1}
v(x) = v^{+}(x) - v^{-}(x), \quad \textrm{for a.e. } x \in \mathbb{R}^{N}
\end{equation}
\noindent and 
\begin{equation}\label{fact2}
 v^{+}(x)v^{-}(x) = 0, \quad \textrm{for a.e. } x \in \mathbb{R}^{N}.
\end{equation}
It is useful to observe that the functional introduced in~\eqref{FURHO}
is monotone with respect to the operation of taking the positive
and negative parts, as pointed out in the following result:
\begin{lemma}\label{positivepart}
 Let $U \subset \mathbb{R}^{N}$ be an open set and let~$v \in \mathcal{H}(U)$.
 Then~$v^{\pm} \in \mathcal{H}(U)$ and
 \begin{equation}\label{eq:Lemmapositive}
  \rho(v^{\pm},U) \leq \rho(v,U).
  \end{equation}
\begin{proof}
Since~$v\in\mathcal{H}(U) = L^2(\RN)\cap H^1(U)$, it is easy to see
that~$v^{\pm} \in \mathcal{H}(U)$, in light of~\eqref{fact1}
and~\eqref{fact2}. We then focus on the proof of~\eqref{eq:Lemmapositive}.

For this, recalling \eqref{FURHO} and using again 
\eqref{fact1}-\eqref{fact2}, we get
\begin{equation*}
 \begin{split}
 \rho(v,U)&\,= \int_{U}|\nabla v|^2 + \iint_{U \times U} 
 \dfrac{|v(x)-v(y)|^2}{|x-y|^{N+2s}}\,dx\,dy \\
 &\,=\int_{U}|\nabla (v^{+}- v^{-})|^2 + \iint_{U \times U} 
 \dfrac{|(v^{+}- v^{-})(x)-(v^{+}- v^{-})(y)|^2}{|x-y|^{N+2s}}\,dx\,dy\\
 &\,= \int_{U}|\nabla v^{+}|^2 +
 \int_{U}|\nabla v^{-}|^2\\&\qquad
 +
  \iint_{U \times U} 
 \dfrac{|v^{+}(x)-v^{+}(y)|^2}{|x-y|^{N+2s}}\,dx\,dy+
 \iint_{U \times U} 
 \dfrac{| v^{-}(x)-v^{-}(y)|^2}{|x-y|^{N+2s}}\,dx\,dy\\&\qquad-2
 \iint_{U \times U} 
 \dfrac{\big(v^{+}(x)- v^{+}(y)\big)\big(v^{-}(x)- v^{-}(y)\big)}{
 |x-y|^{N+2s}}\,dx\,dy\\&\,=
 \rho(v^{+},U)+\rho(v^{-},U)+2\iint_{U \times U} 
 \dfrac{v^{+}(x)v^{-}(y)+ v^{+}(y)v^{-}(x)}{
 |x-y|^{N+2s}}\,dx\,dy\\&\,\ge
 \rho(v^{+},U)+\rho(v^{-},U),
 \end{split}
\end{equation*}
which gives the desired
result in~\eqref{eq:Lemmapositive}.
	\end{proof}
	\end{lemma}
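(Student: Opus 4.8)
The plan is to prove the two assertions separately: first the membership $v^{\pm}\in\mathcal{H}(U)$, and then the energy inequality \eqref{eq:Lemmapositive}, which will follow by expanding $\rho(v,U)$ along the decomposition $v = v^{+}-v^{-}$ recorded in \eqref{fact1}.

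For the membership, I would recall that $\mathcal{H}(U) = L^{2}(\RN)\cap H^{1}(U)$ by \eqref{HUDEF}, and that the truncations $t\mapsto\max\{t,0\}$ and $t\mapsto\max\{-t,0\}$ are globally Lipschitz. Hence the standard chain/truncation rule for Sobolev functions gives $v^{\pm}\in H^{1}(U)$ with $\nabla v^{+} = \chi_{\{v>0\}}\,\nabla v$ and $\nabla v^{-} = -\chi_{\{v<0\}}\,\nabla v$ a.e. in $U$; moreover $|v^{\pm}|\le|v|$ pointwise, so $v^{\pm}\in L^{2}(\RN)$ as well. This is exactly what \eqref{fact1}–\eqref{fact2} make transparent.

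For the inequality, I would substitute $v = v^{+}-v^{-}$ into the definition \eqref{FURHO} of $\rho(v,U)$ and expand both the Dirichlet integral and the Gagliardo seminorm as perfect squares. In the local term, the double product equals $-2\int_{U}\nabla v^{+}\cdot\nabla v^{-} = 2\int_{U}|\nabla v|^{2}\,\chi_{\{v>0\}}\chi_{\{v<0\}}\,dx = 0$ since the sets $\{v>0\}$ and $\{v<0\}$ are disjoint. In the nonlocal term, the double product involves
\[
\big(v^{+}(x)-v^{+}(y)\big)\big(v^{-}(x)-v^{-}(y)\big) = v^{+}(x)v^{-}(x) + v^{+}(y)v^{-}(y) - v^{+}(x)v^{-}(y) - v^{+}(y)v^{-}(x),
\]
and the first two summands vanish a.e.\ by \eqref{fact2}. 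Collecting the surviving terms yields
\[
\rho(v,U) = \rho(v^{+},U) + \rho(v^{-},U) + 2\iint_{U\times U}\frac{v^{+}(x)v^{-}(y) + v^{+}(y)v^{-}(x)}{|x-y|^{N+2s}}\,dx\,dy.
\]

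Finally, I would observe that the remaining double integral is nonnegative, its integrand being a sum of products of nonnegative functions divided by $|x-y|^{N+2s}>0$; hence $\rho(v,U)\ge\rho(v^{+},U)+\rho(v^{-},U)$. Since $\rho(v^{+},U)$ and $\rho(v^{-},U)$ are themselves nonnegative (each is the sum of the square of an $L^{2}$ norm of a gradient and a Gagliardo seminorm), this gives in particular $\rho(v^{\pm},U)\le\rho(v,U)$, which is \eqref{eq:Lemmapositive}. The only mildly delicate points are the a.e.\ chain rule for the truncations $v^{\pm}$ and the bookkeeping of which cross terms survive after invoking \eqref{fact2}; I do not expect either to pose a genuine obstacle.
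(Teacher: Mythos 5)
Your proof is correct and follows essentially the same route as the paper's: decompose $v = v^{+}-v^{-}$, expand both the Dirichlet integral and the Gagliardo seminorm, use the disjointness of $\{v>0\}$ and $\{v<0\}$ (i.e.\ \eqref{fact2}) to kill the local cross term and to reduce the nonlocal cross term to $-v^{+}(x)v^{-}(y)-v^{+}(y)v^{-}(x)$, and conclude that the surviving double integral is nonnegative. You simply spell out two steps the paper leaves implicit — the chain rule $\nabla v^{\pm}=\pm\chi_{\{\pm v>0\}}\nabla v$ for the membership $v^{\pm}\in H^{1}(U)$, and the final observation that $\rho(v^{\mp},U)\geq 0$ turns $\rho(v,U)\geq\rho(v^{+},U)+\rho(v^{-},U)$ into \eqref{eq:Lemmapositive}.
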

Inspired by~\cite{JW}, we now deal with a linear problem associated to
the re\-flec\-tion with respect to a given hyperplane.
For this, 
with the notation in~\eqref{BILI} and~\eqref{BILI2},
for every open and bounded set~$\Omega \subset \mathbb{R}^{N}$, we define the
first (variational) ei\-gen\-va\-lue of the operator~$\LL$ introduced in~\eqref{OPER}
as 
	\begin{equation}\label{eq:Lambda1}
	\Lambda_1 (\Omega) := \inf_{u \in \mathcal{D}(\Omega)} \dfrac{B(u,u)}{\|u\|^2_{L^{2}(\Omega)}}.
	\end{equation}

We point out that, since we can identify $\mathcal{D}(\Omega)$
with the space of functions in~$H^{1}_{0}(\Omega)$ 
that vanish outside~$\Omega$, we see that
\begin{equation}\label{laporhgn}
\Lambda_{1}(\Omega) \geq \Lambda_{-\Delta}(\Omega),\end{equation}
where $\Lambda_{-\Delta}(\Omega)$ stands for the first
eigenvalue of~$-\Delta$ in~$\Omega$ with homogeneous Dirichlet
boundary conditions.
Recalling that
$$
\Lambda_{-\Delta}(\Omega) \to +\infty \quad \textrm{ as } |\Omega| \to 0,
$$
and setting 
	\begin{equation*}
	\Lambda_1 (r) := \inf \big\{ \Lambda_1 (\Omega) {\mbox{ with }}
\Omega \subset \mathbb{R}^n \textrm{ open  with } |\Omega|=r \big\}, \quad r >0, 
	\end{equation*}
	it follows from~\eqref{laporhgn}
that
\begin{equation}\label{LemmaAuto}
\Lambda_1 (r) \to +\infty\quad {\mbox{ as }}r \to 0^{+}.
	\end{equation}
Furthermore, let~$H \subset \mathbb{R}^{N}$ be an open and
affine halfspace. We denote by
$$Q:\mathbb{R}^{N} \to \mathbb{R}^{N}$$ 
the reflection with respect to $\partial H$. 
For convenience, we will sometimes denote with
\begin{equation}\label{RECALL}\bar{x} := Q(x),\end{equation}
for every $x \in \mathbb{R}^{N}$.
With this notation at hand, we say that a function $v:\mathbb{R}^{N} \to \mathbb{R}$
is {\it antisymmetric with respect to $Q$} if 
 \begin{equation}\label{ANTIS}
  v(\bar{x}) = - v(x), \quad \textrm{for every } x \in \mathbb{R}^{N}.
  \end{equation}
Moreover, we give the following definition of antisymmetric supersolutions:
\begin{definition}\label{Soprasol}
 Let $U \subset H$ be an open and bounded set.
 Let~$c \in L^{\infty}(U)$. We say that a
 function $v: \mathbb{R}^{N} \to \mathbb{R}$ is an antisymmetric supersolution of
 \begin{equation}\label{eq:AntisymmProblem}
 \begin{cases}
  \LL v = c v & \textrm{in } U,\\
	v \equiv 0& \textrm{in } H \setminus U,
 \end{cases}
 \end{equation}
 if it satisfies the following properties:
 \begin{itemize}
  \item[(i)] $v$ is antisymmetric,
  \item[(ii)] $v \in \mathcal{H}(U')$ for some open
 set~$U' \subset \mathbb{R}^{N}$ such that $Q(U')=U'$ and~$\overline{U} \subset U'$,
 \item[(iii)] $ v \geq 0$ in $H \setminus U$ and,
 for every $\varphi \in \mathcal{D}(U)$ with $\varphi \geq 0$, one has
 \begin{equation}\label{TEST}
  B(v,\varphi) \geq \int_{U} c(x)v(x)\varphi(x) \, dx.
  \end{equation}
 \end{itemize}
 \end{definition}
The aim is now to provide a suitable maximum principle for antisymmetric
supersolutions, as given in Definition~\ref{Soprasol}.	 \medskip

We start with the following observation on the bilinear form introduced
in~\eqref{BILI}:
 \begin{lemma}\label{wammissibile}
  Let $U' \subset \mathbb{R}^{N}$ be an open set such that $Q(U')=U'$.
  Let $v \in \mathcal{H}(U')$
  be an antisymmetric function such that
 \begin{equation}\label{agg1}
  v \geq 0\quad {\mbox{ in }} H \setminus U,\end{equation}
  for a certain open and bounded set~$U \subset H$ with the property that
 \begin{equation}\label{agg2}
  \overline{U} \subset H\cap U'.
 \end{equation}
 Then, the function
 \begin{equation}\label{DEFW}
 w:= \chi_{H} v^{-} \in \mathcal{D}(U)
 \end{equation} and it holds that
 \begin{equation}\label{eq:BwwBvw}
	B(w,w) \leq - B(v,w).
	\end{equation}
\begin{proof}
We first prove~\eqref{DEFW}.
To this end we first observe that, 
since~$v \in L^{2}(\mathbb{R}^{N})$,
one obviously has $w\in L^{2}(\mathbb{R}^{N})$.
Also, recalling~\eqref{HUDEF},
we know that~$v \in H^{1}(U')$, and therefore it is easy to see
that~$v^-\in H^1(U')$.
In addition, in light of~\eqref{agg1}, we have that~$v^{-}\equiv0$
in~$H\setminus U$. 
As a consequence of these observations and of~\eqref{agg2}, we have that
there exists an open set~$W$
such that
$$\text{$\overline{U}\subset W\subset\overline{W}\subset U'\cap H$
and~$v^-\in H_0^1(W)$.}$$
Therefore, if we identify $w = \chi_{H}v^{-}$ with the zero extension of $v^{-}$ outside of $U$,
we get that~$w \in H^{1}(\mathbb{R}^{N})$.
Moreover, we have that~$w\equiv0$ in~$\R^N\setminus U$. 
These considerations imply~\eqref{DEFW}.

Now we 
focus on the proof of~\eqref{eq:BwwBvw}.
Recalling \eqref{BILI}, we observe that
\begin{equation}\label{dihfugvbvjnb0}
 \begin{split}
  &	B(w,w) + B(v,w) \\=\;&   \int_{\mathbb{R}^{N}}|\nabla w|^2 \, dx 
  + \iint_{\mathbb{R}^{2N}} \dfrac{(w(x)-w(y))^2}{|x-y|^{N+2s}}\, dx \, dy\\&\qquad+
  \int_{\mathbb{R}^{N}} \scal{\nabla v}{\nabla w} \, dx 
  +  \iint_{\mathbb{R}^{2N}} \dfrac{(v(x)-v(y))(w(x)-w(y))}{|x-y|^{N+2s}}\, dx \, dy.
\end{split}	
\end{equation}
We notice that, thanks to~\eqref{agg1},
\begin{equation}\label{dihfugvbvjnb}
\begin{split}
\int_{\mathbb{R}^{N}} 
|\nabla w|^2 \, dx & + \int_{\mathbb{R}^{N}} \scal{\nabla v}{\nabla w} \, dx 
= \int_{U} 
|\nabla v^{-}|^2 \, dx + \int_{U} \scal{\nabla v}{\nabla v^-} \, dx \\[0.1cm]
& = \int_{U}|\nabla v^{-}|^2 \, dx -\int_{U} \scal{\nabla v^{-}}{\nabla v^-} \, dx
= 0.
\end{split}
\end{equation}
Furthermore, we remark that, for any~$x\in\mathbb{R}^{N}$,
\begin{align*}
& w(x)\big(w(x)+v(x)\big)\\
&\qquad = \chi_H(x)v^{-}(x)\big(\chi_H(x)v^{-}(x)+
\chi_H(x)v(x)+\chi_{\R^N\setminus H}(x)v(x)\big)\\
& \qquad = \chi_H(x)v^{-}(x)\big(\chi_H(x)v^{+}(x)+\chi_{\R^N\setminus H}(x)v(x)\big) = 0,
\end{align*}
and therefore
\begin{align*}
& (w(x)-w(y))^2+(v(x)-v(y))(w(x)-w(y))
\\
&\qquad = (w(x)-w(y))\big((w(x)+v(x))-(w(y)+v(y))\big)\\
& \qquad = -w(x)(w(y)+v(y))-w(y)(w(x)+v(x)).
\end{align*}
As a consequence, using~\eqref{ANTIS} and the change of
variable~$Y:=\bar y$ (also recall the notation in~\eqref{RECALL}),
we obtain
\begin{align*}
 & \iint_{\mathbb{R}^{2N}} \dfrac{(w(x)-w(y))^2}{|x-y|^{N+2s}}\, dx \, dy
 + \iint_{\mathbb{R}^{2N}} \dfrac{(v(x)-v(y))(w(x)-w(y))}{|x-y|^{N+2s}}\, dx \, dy \\[0.1cm]
 &\quad = 
  \iint_{\mathbb{R}^{2N}} \dfrac{w(x)(w(y)+v(y))+w(y)(w(x)+v(x))}{|x-y|^{N+2s}}\, dx \, dy \\[0.1cm]
  & \quad = -2\iint_{\mathbb{R}^{2N}} \dfrac{w(x)(w(y)+v(y))}{|x-y|^{N+2s}}\, dx \, dy \\[0.1cm]
  & \quad = -2\iint_{H\times\mathbb{R}^{N}} 
    \dfrac{v^{-}(x)\big(\chi_H(y)v^{-}(y)+v(y)\big)}{|x-y|^{N+2s}}\, dx \, dy \\[0.1cm]
  & \quad = -2\iint_{H\times\mathbb{R}^{N}} 
  \dfrac{v^{-}(x)\big(\chi_H(y)v^{+}(y)+\chi_{\R^N\setminus H}(y)v(y)\big)}
  {|x-y|^{N+2s}}\,dx\,dy \\[0.1cm]
  & \quad = -2\iint_{H\times H} \dfrac{v^{-}(x)v^{+}(y)}{|x-y|^{N+2s}}\,dx\,dy
  -2\iint_{H\times(\R^N\setminus H)} \dfrac{v^{-}(x)v(y)}{|x-y|^{N+2s}}\, dx \, dy \\[0.1cm]
  & \quad = -2\iint_{H\times H} \dfrac{v^{-}(x)v^{+}(y)}{|x-y|^{N+2s}}\,dx\,dy 
  + 2\iint_{H\times (\R^N\setminus H)} \dfrac{v^{-}(x)v( \bar y)}{|x- y|^{N+2s}}\,dx\,d y \\[0.1cm]
  & \quad = -2\iint_{H\times H} \dfrac{v^{-}(x)v^{+}(y)}{|x-y|^{N+2s}}\,dx\,dy
  + 2\iint_{H\times H} \dfrac{v^{-}(x)v( Y)}{|x- \bar Y|^{N+2s}}\,dx\,dY \\[0.1cm]
  & \quad = -2\iint_{H\times H} v^{-}(x)v^{+}(y)\left(\frac1{|x-y|^{N+2s}} 
  - \frac{1}{|x- \bar y|^{N+2s}}\right)\,dx\,dy \\[0.1cm]
  & \qquad\qquad -2\iint_{H\times H} \dfrac{v^{-}(x)v^{-}( y)}{|x- \bar y|^{N+2s}}\, dx \, dy
   \\
   & \quad \leq 0.
\end{align*}
Plugging this information and~\eqref{dihfugvbvjnb}
into~\eqref{dihfugvbvjnb0} we obtain~\eqref{eq:BwwBvw},
as desired.
\end{proof}
\end{lemma}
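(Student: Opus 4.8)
The plan is to prove the two assertions of the statement separately, first the membership~\eqref{DEFW} and then the inequality~\eqref{eq:BwwBvw}.

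\emph{Step 1 ($w\in\mathcal D(U)$).} Since $v\in\mathcal H(U')=L^2(\R^N)\cap H^1(U')$ (recall~\eqref{HUDEF}), the negative part $v^-$ again belongs to $L^2(\R^N)\cap H^1(U')$; here one uses the chain rule for Sobolev functions, which in addition gives $\nabla v^-=-\chi_{\{v<0\}}\nabla v$ a.e. In particular $w=\chi_H v^-\in L^2(\R^N)$. By the sign condition~\eqref{agg1} we have $v^-\equiv0$ in $H\setminus U$, so $w$ is supported in $\overline{U}$, which by~\eqref{agg2} is a compact subset of the open set $U'\cap H$. Fixing an intermediate open set $W$ with $\overline{U}\subset W\subset\overline{W}\subset U'\cap H$, on $W$ we have $w=v^-\in H^1(W)$ with compact support in $W$, hence $w\in H_0^1(W)$; extending by zero we obtain $w\in H^1(\R^N)$ with $w\equiv0$ in $\R^N\setminus U$, i.e. $w\in\mathcal D(U)$, which is~\eqref{DEFW}.

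\emph{Step 2 ($B(w,w)+B(v,w)\le0$).} Recalling~\eqref{BILI}, I would split $B(w,w)+B(v,w)$ into its local and nonlocal parts. For the local part, since $\nabla w=\nabla v^-$ a.e.\ in $U$ and $\nabla w=0$ a.e.\ outside $U$, while $\scal{\nabla v}{\nabla v^-}=-|\nabla v^-|^2$ a.e.\ (trivially on $\{v\ge0\}$ and on $\{v<0\}$, using also $\nabla v=0$ a.e.\ on $\{v=0\}$), one gets
\[
\int_{\R^N}|\nabla w|^2\,dx+\int_{\R^N}\scal{\nabla v}{\nabla w}\,dx=\int_U|\nabla v^-|^2\,dx-\int_U|\nabla v^-|^2\,dx=0 .
\]
For the nonlocal part the key pointwise identity is $w(x)\bigl(w(x)+v(x)\bigr)=0$ for every $x\in\R^N$: on $H$ it reads $v^-\,(v^-+v)=v^-\,v^+=0$ by~\eqref{fact1}--\eqref{fact2}, and off $H$ it holds since $w=0$ there. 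Expanding, this gives
\[
(w(x)-w(y))^2+(v(x)-v(y))(w(x)-w(y))=-w(x)\bigl(w(y)+v(y)\bigr)-w(y)\bigl(w(x)+v(x)\bigr),
\]
so, integrating against $|x-y|^{-N-2s}$, exploiting the symmetry in $x,y$, and then using the antisymmetry~\eqref{ANTIS} of $v$ together with the reflection change of variable $Y=\bar y$ (notation~\eqref{RECALL}) to fold the integral onto $H\times H$, the nonlocal part of $B(w,w)+B(v,w)$ becomes
\[
-2\iint_{H\times H}v^-(x)\,v^+(y)\Bigl(\tfrac1{|x-y|^{N+2s}}-\tfrac1{|x-\bar y|^{N+2s}}\Bigr)\,dx\,dy-2\iint_{H\times H}\dfrac{v^-(x)\,v^-(y)}{|x-\bar y|^{N+2s}}\,dx\,dy .
\]
Now if $x,y$ lie in the same open halfspace $H$ then $|x-y|\le|x-\bar y|$ (decompose into the components parallel and orthogonal to $\partial H$: the parallel parts of $y$ and $\bar y$ coincide, while the orthogonal parts have opposite signs, so the respective squared distances satisfy $(a-b)^2\le(a+b)^2$); hence the parenthesis above is $\ge0$. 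Since $v^\pm\ge0$, both double integrals are $\ge0$, so the displayed expression is $\le0$; combined with the vanishing of the local part this yields $B(w,w)+B(v,w)\le0$, which is~\eqref{eq:BwwBvw}.

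The genuinely routine points above are the sign of $\scal{\nabla v}{\nabla v^-}$, the algebraic expansion, and the geometric inequality $|x-y|\le|x-\bar y|$. I expect the only delicate issues to be bookkeeping ones: first, checking that all the double integrals involved are absolutely convergent, so that the symmetrisations and the change of variable $Y=\bar y$ are legitimate — for this one uses $v\in\mathcal H(U')$, the fact that $w$ is compactly supported in $\overline{U}\subset U'\cap H$, and Cauchy--Schwarz to control the Gagliardo-type terms; and second, carefully tracking, in the chain of equalities for the nonlocal part, the splitting $\R^N=H\cup(\R^N\setminus H)$ inside the inner integral and the way the $\chi_H(y)v^-(y)$ and $\chi_{\R^N\setminus H}(y)\,v(y)$ contributions recombine, after the reflection, into the two $H\times H$ integrals displayed above.
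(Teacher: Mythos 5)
Your proof is correct and follows essentially the same route as the paper: the same decomposition into local and nonlocal parts, the same pointwise identity $w(x)\bigl(w(x)+v(x)\bigr)=0$, the same algebraic expansion, and the same reflection $Y=\bar y$ to fold everything onto $H\times H$, ending with the two nonnegative double integrals. The only addition on your side is that you spell out the geometric inequality $|x-y|\le|x-\bar y|$ for $x,y$ in the same halfspace (which the paper leaves implicit in its final ``$\le 0$'') and flag the absolute-convergence bookkeeping; both are sound and, if anything, make the argument more self-contained.
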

With the aid of Lemma~\ref{wammissibile}, we now prove
the following maximum principle:	
\begin{proposition}\label{PreSMP}
Let $U \subset \mathbb{R}^{N}$ be an open and bounded set
with~$\overline{U} \subset H$.
Mo\-re\-o\-ver, let~$c \in L^{\infty}(U)$ be such that 
\begin{equation}\label{068yhgnfdjfdk}
\|c^{+}\|_{L^{\infty}(U)} < \Lambda_{1}(U),\end{equation}
where the notation in~\eqref{eq:Lambda1} has been used.

Then, every antisymmetric supersolution $v$ of \eqref{eq:AntisymmProblem} 
\emph{(}in $U$\emph{)}
is nonnegative throughout $H$, that is, $v(x) \geq 0$ for a.e.\,$x \in H$.
 \begin{proof}
 We consider the function~$w $ introduced in~\eqref{DEFW}
 and we claim that
 \begin{equation}\label{ruthrughri}
  w\equiv0.
 \end{equation}
To prove it, we argue towards a contradiction, supposing
that~$\|w\|_{L^{2}(U)} \neq 0$.
By Lemma \ref{wammissibile}, we know that~$w \in \mathcal{D}(U)$, and hence it is an admissible test function in~\eqref{TEST}.
Accordingly,
$$ B(v,w)\ge \int_{U}c(x)v(x)w(x)\, dx.$$
{F}rom this, \eqref{eq:Lambda1}, \eqref{eq:BwwBvw}
and~\eqref{068yhgnfdjfdk}, 
we conclude that
\begin{equation*}
 \begin{aligned}
 \Lambda_{1}(U) \|w\|_{L^{2}(U)}^2 &\leq B(w,w) \leq -B(v,w) \leq - \int_{U}c(x)v(x)w(x)\, dx\\
 &= \int_{U}c(x)w^2(x) \, dx \leq \|c^{+}\|_{L^{\infty}(U)}\|w\|_{L^{2}(U)}^2 
 < \Lambda_{1}(U) \|w\|_{L^{2}(U)}^2,
 \end{aligned}
\end{equation*}
which is a contradiction. This proves~\eqref{ruthrughri},
we implies the desired result.
\end{proof}
\end{proposition}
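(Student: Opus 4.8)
The plan is a proof by contradiction that tests the antisymmetric supersolution against the half-space restriction of its own negative part, and then plays the variational eigenvalue $\Lambda_1(U)$ off against the smallness assumption~\eqref{068yhgnfdjfdk}, with Lemma~\ref{wammissibile} supplying the crucial sign information on the bilinear form.

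Concretely, I would set $w:=\chi_H v^-$ as in~\eqref{DEFW}. The first thing to check is that Lemma~\ref{wammissibile} is applicable to the supersolution $v$: property~(ii) of Definition~\ref{Soprasol} furnishes an open set $U'$ with $Q(U')=U'$, $\overline U\subset U'$ and $v\in\mathcal H(U')$ antisymmetric; combining $\overline U\subset U'$ with the hypothesis $\overline U\subset H$ gives~\eqref{agg2}, while~\eqref{agg1} is exactly property~(iii). Hence Lemma~\ref{wammissibile} yields $w\in\mathcal D(U)$ and, most importantly, the inequality $B(w,w)\le -B(v,w)$.

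Now suppose, towards a contradiction, that $\|w\|_{L^2(U)}\neq 0$. Since $w\ge 0$ and $w\in\mathcal D(U)$, it is an admissible nonnegative test function in~\eqref{TEST}, so $B(v,w)\ge\int_U c\,v\,w\,dx$. Using~\eqref{fact1}--\eqref{fact2} one has $v\,v^-=-(v^-)^2$ a.e., hence $v\,w=-w^2$ on $U\subset H$; therefore $-\int_U c\,v\,w\,dx=\int_U c\,w^2\,dx\le\|c^+\|_{L^\infty(U)}\,\|w\|_{L^2(U)}^2$. Since $w$ vanishes outside $U$, the $L^2(U)$- and $L^2(\RN)$-norms of $w$ coincide and the definition~\eqref{eq:Lambda1} of $\Lambda_1(U)$ gives $\Lambda_1(U)\,\|w\|_{L^2(U)}^2\le B(w,w)$; chaining this with $B(w,w)\le -B(v,w)$, with the test-function inequality, and with~\eqref{068yhgnfdjfdk},
\[
\Lambda_1(U)\,\|w\|_{L^2(U)}^2\le B(w,w)\le -B(v,w)\le -\int_U c\,v\,w\,dx\le\|c^+\|_{L^\infty(U)}\,\|w\|_{L^2(U)}^2<\Lambda_1(U)\,\|w\|_{L^2(U)}^2,
\]
a contradiction. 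Thus $w\equiv 0$, i.e.\ $v^-\equiv 0$ in $H$, which is precisely the claimed nonnegativity $v\ge 0$ a.e.\ in $H$.

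The step I expect to be the genuine obstacle is the inequality $B(w,w)+B(v,w)\le 0$, namely Lemma~\ref{wammissibile}, which we are allowed to take for granted here: that is where the antisymmetry of $v$ enters (through the change of variables $Y=\bar y$) and where one needs the elementary but essential fact that $|x-y|\le|x-\bar y|$ for $x,y\in H$, so that the nonlocal cross terms produced by the reflection carry a favourable sign; the local part drops out through the identity $|\nabla v^-|^2+\scal{\nabla v}{\nabla v^-}=0$. Granting that lemma, the only care required is bookkeeping: that $w$ genuinely belongs to $\mathcal D(U)$ and vanishes outside $U$ (so the two $L^2$-norms agree), and that the contradiction hypothesis $\|w\|_{L^2(U)}\neq 0$ is exactly what makes the last inequality strict.
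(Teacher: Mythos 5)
Your proof is correct and follows essentially the same route as the paper's: you form $w=\chi_H v^-$, invoke Lemma~\ref{wammissibile} for $B(w,w)\le -B(v,w)$, test~\eqref{TEST} with $w$, and chain the inequalities against~\eqref{eq:Lambda1} and~\eqref{068yhgnfdjfdk} to reach the same contradiction. The only difference is that you spell out a few bookkeeping points the paper leaves implicit (that the hypotheses of Lemma~\ref{wammissibile} are met, that $w\ge 0$ so it is an admissible test function, and that $vw=-w^2$ on $U$), which is entirely in the spirit of the original.
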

 We are now in the position of establishing a strong
 maximum principle for antisymmetric supersolutions
 (which is the counterpart in the setting of mixed local-nonlocal
 operators of~\cite[Proposition 3.6]{JW}): 
\begin{proposition}\label{StrongMax}
 Let $U \subset H$ be an open and bounded set. Let $c \in L^{\infty}(U)$ and let $v$ be 
 an antisymmetric supersolution of \eqref{eq:AntisymmProblem}
 \emph{(}in~$U$\emph{)}. 
 Assume that 
 \begin{equation}\label{ASPRE}
 v \geq 0\quad \text{a.e.\,in $H$}.
 \end{equation}
 Then, either $v \equiv 0$ in $\mathbb{R}^N$ or
 \begin{equation*}
 \mathrm{ess \, inf}_{K} v >0, \quad \textrm{for every compact set } K \subset U.
 \end{equation*}
 \begin{proof}
 If $v \equiv 0$ in $\mathbb{R}^N$, there is nothing to prove,
 so we assume that
 \begin{equation}\label{assume}
 v\not\equiv 0\quad \text{in $\mathbb{R}^N$}.
 \end{equation}
 In this case, it suffices to show
 that, for a fixed $x_0 \in U$, one has
\begin{equation}\label{jr984thgjfbvjfbv}
\mathrm{ess \, inf}_{B_r(x_0)} v >0,
\end{equation}
for a some radius~$r>0$ small enough.
We then prove \eqref{jr984thgjfbvjfbv}. \vspace*{0.05cm}

First of all, in light of~\eqref{ASPRE},
\eqref{assume} and the fact that~$v$ is antisymmetric, we can find a bounded set $M\subset H$,
with positive measure, which
does not contain a small neighborhood of~$x_0$ and such that
\begin{equation}\label{defDelta}
\delta:= \inf_M v >0.
\end{equation} 
In addition, by~\eqref{LemmaAuto}, we find a radius 
\begin{equation}\label{doewghbn}
r \in \left( 0, \frac{\mathrm{dist}(x_0; (\mathbb{R}^N \setminus H)\cup M)}{4}\right) \end{equation}
such that 
\begin{equation}\label{risolto}
\Lambda_1 (B_{2r}(x_0))> \|c\|_{L^{\infty}(U)}.
\end{equation}
 We now pick a function $ g\in C^2_{0}(\mathbb{R}^N, [0,1])$
 such that
 \begin{equation*}
  g(x):= \begin{cases}
  1, & {\mbox{ if }} x\in B_r(x_0),\\
  0, & {\mbox{ if }} x\in\R^N\setminus B_{2r}(x_0).
 \end{cases}
 \end{equation*}
Moreover, for a given $a>0$ to be chosen later, we define the
function
\begin{equation}\label{defahha}
h:\RN\to\R, \quad h(x):= g(x) - g(\bar{x}) + a \left( \chi_M (x) - \chi_M (\bar{x})\right),
\end{equation}	 
where we are using the notation in~\eqref{RECALL}.
We also define the sets $U_0 := B_{2r}(x_0)$ and
$U'_0 := B_{3r}(x_0) \cup Q(B_{3r}(x_0))$.

We observe that~$h$ is antisymmetric, and moreover
\begin{equation}\label{USEF}
h\equiv 0 \; \textrm{ on } H \setminus (U_0 \cup M) \quad{\mbox{ and }}
\quad h \equiv a\; \textrm{ on } M,
\end{equation}
thanks to~\eqref{doewghbn}.
{F}rom~\eqref{doewghbn} we also deduce that
\begin{equation}\label{EmptyIntersection}
\left( M  \cup Q(M) \right) \cap U'_0 = \varnothing.
\end{equation}
This and the fact that~$M$ is bounded give that~$h \in \mathcal{H}(U'_{0})$.
We now claim that there exists a constant 
$C_1>0$, depending on~$g$, such that
\begin{equation}\label{oejgirgij00}
B(g,\varphi) \leq C_1\int_{U_0}\varphi(x)\,dx, \quad \textrm{for every } \varphi \in \mathcal{D}(U_0) {\mbox{ with }}\varphi\ge0.
\end{equation}
In fact, for any~$\varphi \in \mathcal{D}(U_0)$ with~$\varphi\ge0$, by
an integration by parts,
\begin{equation}\begin{split}\label{oejgirgij}
& \int_{\mathbb{R}^{N}} \scal{\nabla g}{\nabla \varphi} \, dx =
\int_{U_0} \scal{\nabla g}{\nabla \varphi} \, dx\\&\qquad
=-\int_{U_0} \Delta g\,\varphi \, dx\le
\|g\|_{C^2(\R^N)}\int_{U_0}\varphi(x)\,dx.\end{split}
\end{equation}
Moreover, by Proposition~2.3-(ii) in~\cite{JW} (applied here
with~$v:=g$ and~$u:=\varphi$), we have that
\begin{eqnarray*}
&&\frac{1}{2} \iint_{\mathbb{R}^{2N}} \dfrac{(g(x)-g(y))(\varphi(x)-\varphi
 (y))}{|x-y|^{N+2s}}\, dx \, dy
 =\int_{\R^N}(-\Delta)^s g(x)\,\varphi(x)\,dx\\&&\qquad
 =\int_{U_0}(-\Delta)^s g(x)\,\varphi(x)\,dx\le
 \|(-\Delta)^s g\|_{L^\infty(U_0)}\int_{U_0}\varphi(x)\,dx.
\end{eqnarray*}
Recalling~\eqref{BILI},
this and~\eqref{oejgirgij} imply~\eqref{oejgirgij00}.
Similarly, one has that
\begin{equation}\label{oejgirgij0022}
B(g\circ Q,\varphi) \leq C_2 \int_{U_0}\varphi(x)\,dx, \quad \textrm{for every } \varphi \in \mathcal{D}(U_0)
 {\mbox{ with }}\varphi\ge0,
\end{equation}
for some~$C_2>0$.
In addition, we see that, for any~$\varphi \in \mathcal{D}(U_0)$
and any~$x\in\R^N$, from \eqref{doewghbn} we infer that
\begin{eqnarray*}
(\chi_M (x) - \chi_M (\bar{x}))\varphi(x)=0;
\end{eqnarray*}
as a consequence,
\begin{equation}\begin{split}\label{49bvrgk}
 &\frac12\iint_{\mathbb{R}^{2N}} \dfrac{\big(
 (\chi_M (x) - \chi_M (\bar{x}))-(\chi_M (y) - \chi_M (\bar{y}))\big)
 (\varphi(x)-\varphi(y))}{|x-y|^{N+2s}}\,dx\,dy  \\
 &\qquad=-\frac12\iint_{\mathbb{R}^{2N}} \dfrac{(\chi_M (x) - \chi_M (\bar{x}))\varphi(y)+
 (\chi_M (y) - \chi_M (\bar{y}))\varphi(x)}{|x-y|^{N+2s}}\,dx\,dy \\
  &\qquad= -\iint_{U_0\times\R^N} \dfrac{(\chi_M (y) - \chi_M (\bar{y}))\varphi(x)}
  {|x-y|^{N+2s}}\, dx \, dy \\&
  \qquad=-\iint_{U_0\times\mathbb{R}^{N}}
  \dfrac{(\chi_M (y) - \chi_M (\bar{y}))\varphi(x)}{|x-y|^{N+2s}}\,dx\,dy\\
  &\qquad=-\int_{U_0}\varphi(x)\left(\int_M \dfrac{dy}{|x-y|^{N+2s}}
  -\int_{Q(M)} \dfrac{dy}{|x-y|^{N+2s}}\right)\,dx\\&\qquad=
  - \int_{U_0}\varphi(x)\left(\int_M \dfrac{dy}{|x-y|^{N+2s}}
  -\int_{M} \dfrac{dy}{|x-\bar y|^{N+2s}}\right)\,dx \\
  &\qquad\le-C_0\int_{U_0}\varphi(x)\,dx,
\end{split}\end{equation}
where
$$ C_0:=\inf_{x\in U_0}\left(\int_M \dfrac{dy}{|x-y|^{N+2s}}
-\int_{M} \dfrac{dy}{|x-\bar y|^{N+2s}}\right) .
$$
We stress on the fact that the constant~$C_0$ is finite, thanks to~\eqref{doewghbn}.

Now, recalling~\eqref{defahha}, and using~\eqref{oejgirgij00},
\eqref{oejgirgij0022} and~\eqref{49bvrgk}, we conclude that,
for any~$\varphi \in \mathcal{D}(U_0)$, one has
\begin{equation}\begin{split}\label{jirhgierhg}
& B(h,\varphi) =
B(g,\varphi) + B(g\circ Q,\varphi)
\\[0.1cm]
&\qquad +{a} \iint_{\mathbb{R}^{2N}} \dfrac{\big((\chi_M (x) - \chi_M (\bar{x}))-
(\chi_M (y) - \chi_M (\bar{y}))\big)(\varphi(x)-\varphi(y))}{|x-y|^{N+2s}}\,dx\,dy\\[0.1cm]
& \,\,\le
C_a\int_{U_0}\varphi(x)\,dx,
\end{split}
\end{equation}
where
$$C_a:=C_1+C_2-2a\,C_0.$$
Now we perform our choice of the parameter~$a$: we choose~$a>0$ such that
$$C_a < -\|c\|_{L^{\infty}(U_0)}.$$
In particular, with this choice, \eqref{jirhgierhg} yields that
\begin{equation}\label{jfiehgvdjbv497648}\begin{split}
&B(h,\varphi) \leq -\|c\|_{L^{\infty}(U_0)} \int_{U_0}\varphi(x)\,dx
 \leq-\|c^{-}\|_{L^{\infty}(U_0)} \int_{U_0} \varphi(x)\,dx\\&\qquad\le
- \int_{U_0} c^-(x)\varphi(x)\,dx\le 
- \int_{U_0} c^-(x)h(x)\varphi(x)\,dx\\&\qquad
\le  \int_{U_0} c^+(x)h(x)\varphi(x)\,dx
- \int_{U_0} c^-(x)h(x)\varphi(x)\,dx\\&\qquad=
\int_{U_0} c(x)h(x)\varphi(x)\,dx
,\end{split}
\end{equation}
since~$h(x)=g(x)\in[0,1]$ for every~$x\in U_0$.
Now, we recall~\eqref{defDelta}, we define the function $\tilde{v}$ as
\begin{equation}\label{defvtilde}
\tilde{v}(x) := v(x) - \dfrac{\delta}{a}h(x),\end{equation}
and we notice that~$\tilde{v} \in \mathcal{H}(U_0')$ and it
is antisymmetric, since both~$v$ and~$h$ are so.
Furthermore, by~\eqref{ASPRE}, \eqref{defDelta} and~\eqref{USEF}, we have that
$$ \tilde{v}\geq 0\quad {\mbox{on }}H\setminus U_0.$$
In addition, for any~$\varphi\in \mathcal{D}(U_0)$ with~$\varphi\ge0$,
\begin{align*}
B(\tilde v,\varphi) & = B( v,\varphi)-\dfrac{\delta}{a}B(h,\varphi)\\
&\ge \int_{U_0}c(x)v(x)\varphi(x)\,dx
-\dfrac{\delta}{a}\int_{U_0}c(x)h(x)\varphi(x)\,dx \\
& = 
\int_{U_0}c(x)\tilde v(x)\varphi(x)\,dx,
\end{align*}
thanks to~\eqref{TEST}
and~\eqref{jfiehgvdjbv497648}.

As a consequence, we have that~$\tilde{v}$ is an antisymmetric
supersolution of 
\begin{equation*}
\begin{cases}
\mathcal{L} \tilde{v} =c \tilde{v} & \text{in $U_0$},\\
 \tilde{v}\equiv 0 & \text{in $H \setminus U_0$}.
\end{cases}
\end{equation*}
Since $\|c\|_{L^{\infty}(U_0)}<\Lambda_1 (U_0)$, thanks to~\eqref{risolto},
we are in the position to apply Pro\-po\-si\-tion~\ref{PreSMP} to conclude that $\tilde{v} \geq 0$ a.e.
on $U_0$. Recalling~\eqref{defvtilde},
this gives 
$$v \geq \frac{\delta}{a}>0 \quad{\mbox{ a.e. on }}B_r(x_0).$$
This establishes~\eqref{jr984thgjfbvjfbv}, and the proof of
Proposition~\ref{StrongMax}
is thereby complete.
\end{proof}
\end{proposition}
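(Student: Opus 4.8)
The plan is the standard route to a strong maximum principle: eliminate the trivial case, localise the statement, build a strictly positive antisymmetric barrier near an arbitrary point, and compare $v$ with it by means of the (non‑strong) maximum principle already at hand, Proposition~\ref{PreSMP}. If $v\equiv 0$ in $\mathbb{R}^N$ there is nothing to prove, so I assume $v\not\equiv 0$; by antisymmetry this forces $v\not\equiv 0$ in $H$, and since $v\ge 0$ a.e.\ in $H$ the superlevel set $\{x\in H:\ v(x)\ge \delta\}$ has positive measure for some $\delta>0$. It then suffices to show that, for each fixed $x_0\in U$, one has $\mathrm{ess\,inf}_{B_r(x_0)}v>0$ for some small $r=r(x_0)>0$: indeed, an arbitrary compact $K\subset U$ is covered by finitely many such balls, so $\mathrm{ess\,inf}_K v$ is bounded below by the minimum of finitely many positive numbers. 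From now on $x_0\in U$ is fixed.

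To set up the barrier, shrink the superlevel set above (intersect it with a large ball and remove a small ball about $x_0$) to a bounded set $M\subset H$ of positive measure, at positive distance from $x_0$, with $\delta:=\inf_M v>0$. By~\eqref{LemmaAuto} choose $r>0$ so small that $\overline{B_{2r}(x_0)}\subset U$, that $B_{2r}(x_0)$ is disjoint from $M\cup Q(M)\cup(\mathbb{R}^N\setminus H)$ (e.g.\ $r<\tfrac14\,\mathrm{dist}(x_0,(\mathbb{R}^N\setminus H)\cup M)$), and that $\Lambda_1(B_{2r}(x_0))>\|c\|_{L^\infty(U)}$. Fix a cutoff $g\in C^2_0(\mathbb{R}^N,[0,1])$ with $g\equiv 1$ on $B_r(x_0)$ and $g\equiv 0$ outside $U_0:=B_{2r}(x_0)$, and, for a parameter $a>0$ to be chosen, set
\[
h(x):=g(x)-g(\bar x)+a\bigl(\chi_M(x)-\chi_M(\bar x)\bigr),
\]
which is antisymmetric, coincides with $g$ (hence lies in $[0,1]$) on $U_0$, equals $a$ on $M$, vanishes on $H\setminus(U_0\cup M)$, and belongs to $\mathcal{H}(U_0')$ with $U_0':=B_{3r}(x_0)\cup Q(B_{3r}(x_0))$.

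The core of the argument is the one‑sided estimate $B(h,\varphi)\le C_a\int_{U_0}\varphi$ for all nonnegative $\varphi\in\mathcal{D}(U_0)$, with $C_a=C_1+C_2-2aC_0$. Split $B(h,\varphi)=B(g,\varphi)+B(g\circ Q,\varphi)+a\,[\text{nonlocal }\chi_M\text{ term}]$: the first two contributions are bounded by $C_1\int_{U_0}\varphi$ and $C_2\int_{U_0}\varphi$ respectively, via integration by parts in the second‑order part and via the identity expressing the Gagliardo bilinear form through the pointwise $(-\Delta)^s$ (as in \cite{JW}), since $g,g\circ Q\in C^2_0$; the $\chi_M$ term is subtler — because $\chi_M(x)-\chi_M(\bar x)=0$ on $U_0$, only a cross term survives, equal to $-\int_{U_0}\varphi(x)\bigl(\int_M|x-y|^{-N-2s}\,dy-\int_M|x-\bar y|^{-N-2s}\,dy\bigr)dx$, which is $\le -C_0\int_{U_0}\varphi$ with $C_0$ \emph{finite and strictly positive}, because $x,y\in H$ forces $|x-y|<|x-\bar y|$ while $M$ lies at positive distance from $U_0$. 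Choosing $a$ large enough that $C_a\le -\|c\|_{L^\infty(U_0)}$ and using $0\le h\le 1$ on $U_0$ (splitting $c=c^+-c^-$) to pass from $-\|c\|_{L^\infty(U_0)}\int_{U_0}\varphi$ to $\int_{U_0}c\,h\,\varphi$, we arrive at $B(h,\varphi)\le\int_{U_0}c\,h\,\varphi$. I expect this step — designing $h$ so that its bilinear action can be pushed below $-\|c\|_\infty$ thanks to the favourable sign $|x-y|<|x-\bar y|$, and verifying $0<C_0<\infty$ — to be the main obstacle.

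To conclude, set $\tilde v:=v-\tfrac{\delta}{a}h$. It is antisymmetric, belongs to $\mathcal{H}(U_0')$, and $\tilde v\ge 0$ on $H\setminus U_0$ (on $M$ since $v\ge\delta$ while $h=a$, and elsewhere since $h=0$ and $v\ge 0$). Subtracting $\tfrac{\delta}{a}$ times the barrier estimate from the supersolution inequality~\eqref{TEST} for $v$ gives $B(\tilde v,\varphi)\ge\int_{U_0}c\,\tilde v\,\varphi$ for every nonnegative $\varphi\in\mathcal{D}(U_0)$, so $\tilde v$ is an antisymmetric supersolution of $\mathcal{L}\tilde v=c\tilde v$ in $U_0$. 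Since $\|c\|_{L^\infty(U_0)}\le\|c\|_{L^\infty(U)}<\Lambda_1(U_0)$, Proposition~\ref{PreSMP} yields $\tilde v\ge 0$ a.e.\ in $H$, in particular on $U_0$; as $h\equiv 1$ on $B_r(x_0)$, this gives $v\ge\tfrac{\delta}{a}>0$ a.e.\ on $B_r(x_0)$, which is the desired local bound and completes the proof.
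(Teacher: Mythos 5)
Your proposal is correct and follows essentially the same argument as the paper: the same antisymmetric barrier $h=g-g\circ Q+a(\chi_M-\chi_M\circ Q)$, the same one-sided estimate $B(h,\varphi)\le (C_1+C_2-2aC_0)\int_{U_0}\varphi$ with $a$ chosen large, and the same comparison of $\tilde v=v-\tfrac{\delta}{a}h$ via Proposition~\ref{PreSMP}. You even make explicit two points the paper leaves implicit (that $C_0>0$, which is what allows the choice of $a$, and that $r$ should also be taken small enough that $B_{2r}(x_0)\subset U$ so that $\mathcal{D}(U_0)\subset\mathcal{D}(U)$ in~\eqref{TEST}), so there is nothing to correct.
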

With this preliminary work, we now prove Theorem~\ref{Simmetria}.
For this, let~$u \in C(\Omega) $ be a weak solution
of~\eqref{SymmetryProblem}.
We fix the usual notation needed to implement the moving plane method.
For every $\lambda \in (-1,0]$ we define the following:
\begin{align*}
\Omega_{\lambda} & := \{x \in \Omega: x_1 < \lambda\},\\
\Sigma_{\lambda} &:=  \{x \in \mathbb{R}^N: x_1 < \lambda\},\\
Q_{\lambda}(x) & = x_{\lambda}:= (2\lambda -x_1, x_2, \ldots, x_N),\\
{\mbox{and }}\quad u_{\lambda}(x) &:= u(x_{\lambda}).
\end{align*}
We also define the function
\begin{equation}\label{eq:defc}
 c(x):= \begin{cases}
  \displaystyle  \dfrac{f(u_{\lambda}(x) )- f(u(x)))}{u_{\lambda}(x)-u(x)}, & 
  \text{if $u_{\lambda}(x)\neq u(x)$},\\
  0, & \textrm{if $u_{\lambda}(x)= u(x)$}.
 \end{cases}
\end{equation}
We observe that~$c\in L^{\infty}(\Omega_{\lambda})$, thanks to the
Lipschitz assumption on~$f$.

Furthermore, setting
\begin{equation}\label{degjidj}
v_\lambda := u_{\lambda} - u,
\end{equation}
we point out 
the following observation:
\begin{lemma}\label{lema:weak}
Let~$u$ be a weak solution of~\eqref{SymmetryProblem}
according to Definition~\ref{def:uweak}.

Then, the function~$v_\lambda$ in~\eqref{degjidj} is an
antisymmetric supersolution of~\eqref{eq:AntisymmProblem} in
$\Omega_\lambda$,
according to Definition~\ref{Soprasol},
with~$c$ as in~\eqref{eq:defc}.
\end{lemma}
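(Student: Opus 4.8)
The plan is to verify, one at a time, the three conditions of Definition~\ref{Soprasol}, applied with $H:=\Sigma_\lambda$, $U:=\Omega_\lambda$, $Q:=Q_\lambda$ and $c$ as in~\eqref{eq:defc} (note that $\Omega_\lambda\subset\Sigma_\lambda$, that $\Omega_\lambda$ is open and bounded, and that $c\in L^\infty(\Omega_\lambda)$ by the Lipschitz bound on $f$, as already observed). Condition~(i), antisymmetry, is immediate since $Q_\lambda$ is an involution: $v_\lambda(x_\lambda)=u((x_\lambda)_\lambda)-u(x_\lambda)=u(x)-u_\lambda(x)=-v_\lambda(x)$. For~(ii) I would use that $Q_\lambda$ is an affine isometry of $\RN$, so $u_\lambda=u\circ Q_\lambda$ has the same $H^1(\RN)$-norm as $u$; hence $v_\lambda\in H^1(\RN)=\mathcal{H}(\RN)$, and $U':=\RN$ trivially satisfies $Q_\lambda(U')=U'$ and $\overline{\Omega_\lambda}\subset U'$. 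The first half of~(iii) is also easy: if $x\in\Sigma_\lambda\setminus\Omega_\lambda$, then $x_1<\lambda$ forces $x\notin\Omega$, so $u(x)=0$ by~\eqref{SymmetryProblem}, while $u_\lambda(x)=u(x_\lambda)\geq0$ since $u\geq0$ on all of $\RN$; thus $v_\lambda\geq0$ on $\Sigma_\lambda\setminus\Omega_\lambda$.

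It remains to establish the weak inequality~\eqref{TEST}, and I expect in fact to obtain an equality. Fix a nonnegative $\varphi\in\mathcal{D}(\Omega_\lambda)$. Since $\Omega_\lambda\subset\Omega$ we have $\varphi\in\mathcal{D}(\Omega)$, so Definition~\ref{def:uweak} gives $B(u,\varphi)=\int_{\Omega_\lambda}f(u)\varphi\,dx$. To handle $B(u_\lambda,\varphi)$ I would first record the reflection-invariance of the bilinear form: since $Q_\lambda$ has unit Jacobian, orthogonal linear part, and $|Q_\lambda x-Q_\lambda y|=|x-y|$, the substitutions $\xi=Q_\lambda x$, $\eta=Q_\lambda y$ in the two integrals defining $B$ in~\eqref{BILI} yield $B(g\circ Q_\lambda,h\circ Q_\lambda)=B(g,h)$ for all admissible $g,h$. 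Applying this with $g=u$ and $h=\varphi\circ Q_\lambda$ gives $B(u_\lambda,\varphi)=B(u,\varphi\circ Q_\lambda)$. Now $\varphi\circ Q_\lambda\in H^1(\RN)$ and vanishes outside $Q_\lambda(\Omega_\lambda)$, so provided
\begin{equation*}
Q_\lambda(\Omega_\lambda)\subseteq\Omega,
\end{equation*}
we get $\varphi\circ Q_\lambda\in\mathcal{D}(\Omega)$, and then Definition~\ref{def:uweak} together with the change of variables $y=Q_\lambda x$ gives $B(u_\lambda,\varphi)=\int_{\RN}f(u(x))\,\varphi(Q_\lambda x)\,dx=\int_{\Omega_\lambda}f(u_\lambda)\varphi\,dx$.

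Subtracting the two identities and using that, by the definition of $c$ in~\eqref{eq:defc}, one has $f(u_\lambda(x))-f(u(x))=c(x)v_\lambda(x)$ for a.e.\ $x$ (both sides vanishing where $u_\lambda=u$), I obtain
\begin{equation*}
B(v_\lambda,\varphi)=B(u_\lambda,\varphi)-B(u,\varphi)=\int_{\Omega_\lambda}\big(f(u_\lambda)-f(u)\big)\varphi\,dx=\int_{\Omega_\lambda}c\,v_\lambda\,\varphi\,dx,
\end{equation*}
which is~\eqref{TEST}, indeed with equality. Together with~(i), (ii) and the sign condition above, this shows that $v_\lambda$ is an antisymmetric supersolution of~\eqref{eq:AntisymmProblem} in $\Omega_\lambda$ with coefficient $c$.

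The only genuinely non-routine step — which I would single out as the main obstacle — is the inclusion $Q_\lambda(\Omega_\lambda)\subseteq\Omega$: this is exactly where the symmetry and convexity of $\Omega$ with respect to $\{x_1=0\}$ are used, together with the restriction $\lambda\in(-1,0]$. Concretely, for $(x_1,x')\in Q_\lambda(\Omega_\lambda)$ one has $x_1>\lambda$ and $(2\lambda-x_1,x')\in\Omega$; symmetry across $\{x_1=0\}$ then gives $(x_1-2\lambda,x')\in\Omega$, and since $\lambda\leq0$ one checks $2\lambda-x_1\leq x_1\leq x_1-2\lambda$, so $(x_1,x')$ lies on the segment joining two points of $\Omega$ on a line parallel to $e_1$, hence $(x_1,x')\in\Omega$ by convexity of $\Omega$ in the $x_1$-direction. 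Everything else is bookkeeping with the bilinear form, with the change of variables, and with the (elementary) boundedness — hence integrability on $\Omega_\lambda$ — of $f(u)$ and $f(u_\lambda)$, which follows from the continuity of $u$ and the local Lipschitzianity of $f$.
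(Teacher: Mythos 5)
Your proposal is correct and follows essentially the same route as the paper: you use the reflection invariance of $B$ to rewrite $B(u_\lambda,\varphi)=B(u,\varphi\circ Q_\lambda)$, invoke the inclusion $Q_\lambda(\Omega_\lambda)\subseteq\Omega$ so that $\varphi\circ Q_\lambda$ is an admissible test function for $u$, and subtract the two identities; the paper does the same, writing $\varphi_\lambda$ for $\varphi\circ Q_\lambda$ and carrying out the change of variables termwise on $B$. The only difference is cosmetic: you spell out the convexity/symmetry argument for $Q_\lambda(\Omega_\lambda)\subseteq\Omega$, which the paper states without detail (``$\varphi_\lambda\in\mathcal{D}(Q_\lambda(\Omega_\lambda))\subset\mathcal{D}(\Omega)$''), and you verify the antisymmetry condition explicitly.
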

\begin{proof}
We notice that~$v_\lambda\in H^1(\R^N)\subset \mathcal{H}(U')$, 
for every open set~$U' \subset \mathbb{R}^{N}$
such that~$Q(U')=U'$ and~$\overline{\Omega_\lambda} \subset U'$.
Moreover, since~$u \geq 0$ in~$\R^N$ and~$u \equiv 0$ on~$\Sigma_\lambda \setminus \Omega_\lambda$,
we have that~$v_\lambda \geq 0$ on~$\Sigma_\lambda \setminus \Omega_\lambda$.
In addition, for any~$\varphi\in \mathcal{D}(\Omega_\lambda)$
and for any~$x\in\R^N$, we have
\begin{equation}\begin{split}\label{dwejgerheruhebhrnbriwt45846}
&\scal{\nabla u_\lambda(x)}{\nabla \varphi(x)}
=\big(-\partial_1 u,\partial_2 u, \cdots,\partial_N u\big)(\bar x)\cdot
\big(\partial_1 \varphi,\partial_2 \varphi, \cdots,\partial_N \varphi\big)(x)
\\&\qquad =
\big(\partial_1 u,\partial_2 u, \cdots,\partial_N u\big)(X)\cdot
\big(-\partial_1 \varphi,\partial_2 \varphi, \cdots,\partial_N \varphi\big)(\bar X)\\
&\qquad =\scal{\nabla u(X)}{\nabla \varphi_\lambda(X)},
\end{split}\end{equation}
where~$X:=\bar x$.
Similarly, setting also~$Y:=\bar y$,
\begin{eqnarray*}
&&  \dfrac{(u_\lambda( x)-u_\lambda( y))(
\varphi(x)-\varphi(y))}{|x-y|^{N+2s}} =
 \dfrac{(u(\bar x)-u(\bar y))(
\varphi(x)-\varphi(y))}{|x-y|^{N+2s}}\\&&\qquad=
\dfrac{(u(X)-u(Y))(
\varphi(\bar X)-\varphi(\bar Y))}{|\bar X-\bar Y|^{N+2s}}=
\dfrac{(u(X)-u(Y))(
\varphi(\bar X)-\varphi(\bar Y))}{|X- Y|^{N+2s}}\\&&\qquad=
\dfrac{(u(X)-u(Y))(
\varphi_\lambda( X)-\varphi_\lambda(Y))}{| X- Y|^{N+2s}}.
\end{eqnarray*}
{F}rom this and~\eqref{dwejgerheruhebhrnbriwt45846}, we obtain that
\begin{align*}
& B(u_{\lambda},\varphi) \\
& \quad = \int_{\mathbb{R}^{N}} \scal{\nabla u_\lambda(x)}{\nabla \varphi(x)} \,dx 
+ \iint_{\mathbb{R}^{2N}} \dfrac{(u_\lambda( x)-u_\lambda( y))(
\varphi(x)-\varphi(y))}{|x-y|^{N+2s}}\,dx\,dy \\
& \quad = 
\int_{\mathbb{R}^{N}} \scal{\nabla u(X)}{\nabla \varphi_\lambda
( X)} \, dX 
+  \iint_{\mathbb{R}^{2N}} \dfrac{(u(X)-u(Y))(\varphi_\lambda( X)-\varphi_\lambda(Y))}
{|X- Y|^{N+2s}}\,dX\,dY \\
&\quad = B(u,\varphi_\lambda).
\end{align*}
As a consequence, since~$\varphi_\lambda\in \mathcal{D}(Q_\lambda(
\Omega_\lambda))\subset \mathcal{D}(\Omega)$,
we can use Definition~\ref{def:uweak} to find that
\begin{align*}
B(u_{\lambda},\varphi)& = \int_{\R^N} f(u(x))\varphi_\lambda(x)\,dx\\
& = \int_{\R^N} f(u(\bar X))\varphi(X)\,dX=
\int_{\R^N} f(u_\lambda(X))\varphi(X)\,dX .
\end{align*}
Therefore,
\begin{align*}
B(v_\lambda,\varphi)& =B(u_{\lambda},\varphi)-B( u,\varphi)\\
& =
\int_{\R^N} f(u_\lambda(x))\varphi(x)\,dx-
\int_{\R^N} f(u(x))\varphi(x)\,dx\\
& = \int_{\R^N}c(x)v_\lambda(x)\varphi(x)\,dx,
\end{align*}
which proves~\eqref{TEST}, and thereby completes the proof
of Lemma~\ref{lema:weak}.
\end{proof}
With these considerations, we are now ready to prove Theorem \ref{Simmetria}.
\begin{proof}[Proof of Theorem \ref{Simmetria}]
For every~$\lambda \in (-1,0)$, we define the
function
\begin{equation}\label{eq:wlambda}
 w_{\lambda}:\mathbb{R}^{N} \to \mathbb{R}, \quad
 w_{\lambda}(x):= \begin{cases}
 (u-u_{\lambda})^{+}(x) & \text{in $\Sigma_{\lambda}$}, \\
 (u-u_{\lambda})^{-}(x) & \text{in $\mathbb{R}^{N} \setminus \Sigma_{\lambda}$},
\end{cases}
\end{equation}
where, {\it differently from before}, we have set
$$(u-u_{\lambda})^{-} := \min \{ u-u_{\lambda}, 0\},$$
which is {\it nonpositive}.
We claim that
\begin{equation}\label{LAB-LA5}
w_{\lambda}\in H^{1}(\mathbb{R}^{N}).
\end{equation} 
Indeed, we know that~$u\in H^1(\R^N)$ and thus~$u-u_\lambda\in H^1(\R^N)$.
Accordingly, we have that
(see e.g. the Chain Rule on page~296 of~\cite{LeoniBook})
\begin{equation}\label{LAB-LA1}
(u-u_\lambda)^+\in H^1(\R^N).
\end{equation}
Moreover, $u\in C(\R^N)$, and consequently
\begin{equation}\label{LAB-LA2}
(u-u_\lambda)^+\in C(\R^N).
\end{equation}
In addition, $u=u_\lambda$ along~$\partial\Sigma_\lambda$.
{F}rom this fact, \eqref{LAB-LA1}
and~\eqref{LAB-LA2}, we obtain that
\begin{equation}\label{LAB-LA3}
(u-u_\lambda)^+\chi_{\Sigma_\lambda}\in H^1_0(\Sigma_\lambda)
\subset H^1(\R^N),
\end{equation}
see e.g.~\cite[Theorem 9.17]{BrezisBook}.
Similarly,
\begin{equation}\label{LAB-LA4}
(u-u_\lambda)^-\chi_{\R^N\setminus
\Sigma_\lambda}\in H^1(\R^N).
\end{equation}
We also observe that
$$w_\lambda=(u-u_\lambda)^+\chi_{\Sigma_\lambda}+
(u-u_\lambda)^-\chi_{\R^N\setminus
\Sigma_\lambda}.$$
{F}rom this, \eqref{LAB-LA3}
and~\eqref{LAB-LA4},
we obtain~\eqref{LAB-LA5}, as desired.

Furthermore, we claim that
\begin{equation}\label{HY703}
{\mbox{$w_\lambda\equiv0$ in~$\R^N\setminus(\Omega_\lambda\cup Q_\lambda(\Omega_\lambda))
\subset\R^N\setminus\Omega$.}}\end{equation}
Indeed, if~$x\in\Sigma_\lambda\setminus\Omega_\lambda$,
then~$w_\lambda(x)=(0-u_\lambda(x))^+=0$.
If instead~$x\in Q_\lambda(\Sigma_\lambda\setminus\Omega_\lambda)$,
then~$\bar x\in \Sigma_\lambda\setminus\Omega_\lambda$
and accordingly
$$0=w_\lambda(\bar x)=(u(\bar x)-u_\lambda(\bar x))^+=
(u_\lambda(x)-u(x))^+.$$ 
This gives that~$u_\lambda(x)\le u(x)$, and therefore~$w_\lambda(x)=
(u(x)-u_\lambda(x))^-=0$.

{F}rom these observations, we obtain~\eqref{HY703}.
Then, \eqref{LAB-LA5} and~\eqref{HY703} give that we can take~$w_{\lambda}$
as an admissibile test function in~\eqref{ADMI}. In this way, we obtain 
\begin{equation}\label{eq:di_u}
B(u, w_\lambda)=	\int_{\mathbb{R}^{N}}f(u(x))w_{\lambda}(x) \, dx.
\end{equation}
Similarly, 
	\begin{equation}\label{eq:di_ulambda}
	B(u_\lambda,w_\lambda)=
\int_{\mathbb{R}^{N}}f(u_{\lambda}(x))w_{\lambda}(x) \, dx.
\end{equation}
Subtracting \eqref{eq:di_ulambda} to \eqref{eq:di_u},
and recalling~\eqref{BILI}, we get
\begin{equation}\begin{split} \label{od3gfhrj}
& \int_{\mathbb{R}^{N}} \scal{\nabla (u-u_{\lambda})}
{\nabla w_{\lambda}} \,dx \\
 & \quad\quad +\displaystyle \iint_{\mathbb{R}^{2N}}
 \dfrac{((u(x)-u_{\lambda}(x))-(u(y)-u_{\lambda}(y)))(w_{\lambda}(x)-w_{\lambda}(y))}
 {|x-y|^{N+2s}}\,\,dx\,dy\\
  & = \int_{\mathbb{R}^{N}}\big(f(u(x))-f(u_{\lambda}(x))\big)w_{\lambda}(x)\, dx.
\end{split}
\end{equation}
Now, we use formula (3.9) in \cite{MPS}, which gives that
\begin{align*}
 &\iint_{\mathbb{R}^{2N}} 
 \dfrac{\big((u(x)-u_{\lambda}(x))-(u(y)-u_{\lambda}(y))\big)
 (w_{\lambda}(x)-w_{\lambda}(y))}{|x-y|^{N+2s}}\, \,dx\,dy \\
 &\qquad \geq  
 \iint_{\mathbb{R}^{2N}} \dfrac{|w_{\lambda}(x)-w_{\lambda}(y)|^2}{|x-y|^{N+2s}}\,dx\,dy \geq 0.
 \end{align*}
Using this information into~\eqref{od3gfhrj}, and
recalling~\eqref{HY703}, we obtain that
\begin{equation}\label{djietu86yghjfv30o22}
\begin{split}
& \int_{\mathbb{R}^{N}} \scal{\nabla (u-u_{\lambda})}
 {\nabla w_{\lambda}} \,dx
  \le \int_{\mathbb{R}^{N}}\big(f(u(x))-f(u_{\lambda}(x))\big)w_{\lambda}(x)\,dx \\
 &\qquad = \int_{\mathbb{R}^{N}}\frac{f(u(x))-f(u_{\lambda}(x))}
 {u(x)-u_\lambda(x)}\,\big(u(x)-u_\lambda(x)\big)w_{\lambda}(x)\,dx \\
 & \qquad = \int_{\mathbb{R}^{N}}\frac{f(u(x))-f(u_{\lambda}(x))}
  {u(x)-u_\lambda(x)}w_{\lambda}^2(x)\,dx \\ 
 & \qquad = \int_{\Omega_{\lambda} \cup Q(\Omega_{\lambda})}
 \frac{f(u(x))-f(u_{\lambda}(x))}{u(x)-u_\lambda(x)}w_{\lambda}^2(x)\,dx.
 \end{split}
 \end{equation}
We also notice that, thanks to~\eqref{HY703},
\begin{equation*}
\int_{\mathbb{R}^{N}} \scal{\nabla (u-u_{\lambda})}
 {\nabla w_{\lambda}}\,dx 
 = \int_{\mathbb{R}^{N}}|\nabla w_{\lambda}|^2\,dx 
	= \int_{\Omega_{\lambda} \cup Q(\Omega_{\lambda})}|\nabla w_{\lambda}|^2\,dx.
\end{equation*}
{F}rom this and~\eqref{djietu86yghjfv30o22}, we deduce that
\begin{equation} \label{od3gfhrjBIS}
\begin{split}
\int_{\Omega_{\lambda} \cup Q(\Omega_{\lambda})}|\nabla w_{\lambda}|^2\,dx
& \leq  
\int_{\Omega_{\lambda} \cup Q(\Omega_{\lambda})}
\frac{f(u(x))-f(u_{\lambda}(x))}{u(x)-u_\lambda(x)}w_{\lambda}^2(x)\,dx \\[0.1cm]
& \leq 
C \int_{\Omega_{\lambda} \cup Q(\Omega_{\lambda})}|w_{\lambda}|^2 \,dx,
\end{split}
\end{equation}
for some constant~$C>0$, depending on~$f$ and~$\|u\|_{L^\infty(\Omega)}$.

Now, using Lemma~2.10 in~\cite{BVV}, we obtain that
\begin{equation}\label{Grad}
\int_{\Omega_{\lambda} \cup Q(\Omega_{\lambda})}|\nabla w_{\lambda}|^2\,dx
	\leq C|\Omega_{\lambda} \cup Q(\Omega_{\lambda})|^{1/N}
	\int_{\Omega_{\lambda} \cup Q(\Omega_{\lambda})}|
\nabla w_{\lambda}|^2\,dx,
\end{equation}
uo to renaming~$C$, which possibly depends also on~$N$.
As a consequence, if $\lambda$ is sufficiently close to~$-1$, we see that
\begin{equation*}
 C|\Omega_{\lambda} \cup Q(\Omega_{\lambda})|^{1/N} < \dfrac{1}{2},
\end{equation*}
which, combined with~\eqref{Grad}, gives that
$$\int_{\Omega_{\lambda} \cup Q(\Omega_{\lambda})}
|\nabla w_{\lambda}|^2 \,dx= 0,$$
provided that $\lambda$ is sufficiently close to $-1$. 
{F}rom this and the Poincar\'{e} inequality we get
that~$w_{\lambda} \equiv 0$ in $\Omega_{\lambda} 
\cup Q_\lambda(\Omega_{\lambda})$ if~$\lambda$
is sufficiently close to~$-1$, which, recalling~\eqref{eq:wlambda},
implies that
\begin{equation}\label{spwpr4oytyo6789}
u\le u_\lambda \quad \text{in $\Omega_\lambda$},
\end{equation}
if~$\lambda$ is sufficiently close to~$-1$.
As a matter of fact, formula~\eqref{HY703} also gives
\begin{equation}\label{spwpr4oytyo6789BIS}
u\le u_\lambda \quad \text{in $\Sigma_\lambda\setminus\Omega_\lambda$}.
\end{equation}
Now, we define the set 
$$\Lambda_{0} := \big\{\lambda \in (-1,0):\,\,\text{$u \leq u_t \textrm{ in } \Omega_{t}$ 
for every $t \in (-1,\lambda]$}\big\}.$$
In light of~\eqref{spwpr4oytyo6789}, the following quantity is well defined:
\begin{equation}\label{CONTR}
\overline{\lambda} := \sup \Lambda_{0}.\end{equation}
The goal is now to prove that
\begin{equation}\label{CONTRBIS}
\overline{\lambda}= 0.
\end{equation}
For this, we recall the definition of~$v_\lambda$ in~\eqref{degjidj},
and we observe that,
since $u$ is continuous in $\Omega$,
$$v_{\overline{\lambda}}\ge0, 
\quad \textrm{in } \Omega_{\overline{\lambda}}.$$
This and~\eqref{spwpr4oytyo6789BIS} imply that
$$v_{\overline{\lambda}}\ge0, 
\quad \textrm{in } \Sigma_{\overline{\lambda}}.$$
As a consequence, by Lemma~\ref{lema:weak} and
Proposition~\ref{StrongMax} (applied here with~$H:=\Sigma_{\overline{\lambda}}$,
$U:=\Omega_{\overline{\lambda}}$ and~$v:=v_{\overline{\lambda}}$),
we have that
$$v_{\overline{\lambda}}>0, \quad \textrm{in } \Omega_{\overline{\lambda}}.$$
Now, we consider a compact
set~$K \subset \Omega_{\overline{\lambda}}$ (to be chosen later on),
and we notice
that for~$\overline{\tau}>0$ small enough, we have that
\begin{equation}\label{usoqui}
\text{$v_{\overline{\lambda}+\tau}>0$\,\,in $K$}\qquad
(\text{for all $\tau \in (0, \overline{\tau})$}).
\end{equation}
Now, for every fixed $\tau \in (0, \overline{\tau})$, we consider
the function~$w_{\overline{\lambda}+\tau}$ defined as
in~\eqref{eq:wlambda} (with~$\lambda:=\overline{\lambda}+\tau$).
We notice that, thanks to~\eqref{LAB-LA5} and~\eqref{HY703},
we can take~$w_{\overline{\lambda}+\tau}$
as an admissibile test function in~\eqref{ADMI}, obtaining that 
\begin{equation*}
\begin{split}
& B(u, w_{\overline{\lambda}+\tau})=	\int_{\mathbb{R}^{N}}
f(u(x))w_{{\overline{\lambda}+\tau}}(x) \, dx\qquad\text{and} \\
& \quad B(u_{\overline{\lambda}+\tau},w_{\overline{\lambda}+\tau})=
\int_{\mathbb{R}^{N}}f(u_{\overline{\lambda}+\tau}(x))
w_{\overline{\lambda}+\tau}(x) \, dx.
\end{split}
\end{equation*}
{F}rom here, we repeat the same argument
in~\eqref{od3gfhrj}--\eqref{od3gfhrjBIS} to find that
$$\int_{\Omega_{\overline{\lambda}+\tau} \cup Q(\Omega_{\overline{\lambda}+\tau})}
|\nabla w_{\overline{\lambda}+\tau}|^2\,dx
\le
C \int_{\Omega_{\overline{\lambda}+\tau} 
\cup Q(\Omega_{\overline{\lambda}+\tau})}|w_{\overline{\lambda}+\tau}|^2 \,dx,
$$
for some constant~$C>0$,
depending on~$f$ and~$\|u\|_{L^\infty(\Omega)}$.

{F}rom this, re\-cal\-ling~\eqref{usoqui}, we obtain that
\begin{equation*}
\int_{\Omega_{\overline{\lambda}+\tau} \cup Q(\Omega_{\overline{\lambda}+\tau})}
|\nabla w_{\overline{\lambda}+\tau}|^2\,dx
\le C\int_{(\Omega_{\overline{\lambda}+\tau}\setminus K) 
\cup Q(\Omega_{\overline{\lambda}+\tau}\setminus K)}|w_{\overline{\lambda}+\tau}|^2\,dx.	
\end{equation*}
Hence, 
making again use of Lemma~2.10 in~\cite{BVV}, we get
\begin{equation}\begin{split}\label{det4458645gdf}
&	\int_{\Omega_{\overline{\lambda}+\tau} 
\cup Q(\Omega_{\overline{\lambda}+\tau})}|\nabla w_{
\overline{\lambda}+\tau}|^2 \,dx
	\\&\qquad \leq C\,
|(\Omega_{\overline{\lambda}+\tau}\setminus K)
\cup Q(\Omega_{\overline{\lambda}+\tau}\setminus K)|^{1/N}
\int_{(\Omega_{\overline{\lambda}+\tau}\setminus K) \cup
 Q(\Omega_{\overline{\lambda}+\tau}\setminus K)}|
\nabla v_{\overline{\lambda}+\tau}|^2\,dx,
\end{split}	\end{equation}
up to relabeling~$C>0$ (which may also depend on~$N$).
Now we choose the compact $K$ big enough and the number
$\overline{\tau}$
small enough such that
$$C\,
|(\Omega_{\overline{\lambda}+\tau}\setminus K) 
\cup Q(\Omega_{\overline{\lambda}+\tau}\setminus K)|^{1/N} < 1.$$
Using this information into~\eqref{det4458645gdf}, we conclude that 
\begin{equation*}
 \int_{\Omega_{\overline{\lambda}+\tau} \cup Q(\Omega_{\overline{\lambda}+\tau})}|
 \nabla w_{\overline{\lambda}+\tau}|^2\,dx = 0.
	\end{equation*}
{F}rom this and the Poincar\'e inequality, we find
that~ $w_{\overline{\lambda}+\tau}\equiv0$ in~$\Omega_{
\overline{\lambda}+\tau}$, hence
	$$u \leq u_{\overline{\lambda}+\tau} \quad \text{in $\Omega_{\overline{\lambda}+\tau}$},$$
for every $\tau \in (0, \overline{\tau})$,
provided $\overline{\tau}>0$ is small enough. This yields a contradiction
with~\eqref{CONTR}, from which we conclude that~\eqref{CONTRBIS}
holds true, as desired. 

In particular, from~\eqref{CONTRBIS} we see that, for all~$\lambda\in(-1,0)$
and all~$x\in\Omega_\lambda$,
\begin{equation}\label{detr34y54yu45u4u}
u(x)\le u_\lambda(x)=u(2\lambda-x_1, x_2,\cdots,x_N).\end{equation}
Consequently, 
\begin{equation}\label{s3r34t43yh}
u(x)\le u(-x_1, x_2,\cdots,x_N),\end{equation}
for all~$x\in\Omega\cap\{x_1<0\}$.
In the same way, sliding the moving plane from right to left, one sees that,
for all~$x\in\Omega\cap\{x_1>0\}$, one has
$$ u(x)\le u(-x_1, x_2,\cdots,x_N).$$
This implies that
$$ u(-x_1, x_2,\cdots,x_N)\le u(x),$$
for all~$x\in\Omega\cap\{x_1<0\}$.
{F}rom this and~\eqref{s3r34t43yh}, we conclude that
$$ u(x)=u(-x_1, x_2,\cdots,x_N),$$
for all~$x\in\Omega$, which says that~$u$ is symmetric with respect
to~$\{x_1=0\}$.

Furthermore, from~\eqref{detr34y54yu45u4u} it plainly follows
that~$u$ is increasing in the~$x_1$-di\-rec\-tion in~$\Omega\cap\{x_1<0\}$.
The proof of Theorem~\ref{Simmetria} is thereby complete.
\end{proof}

\section{One-dimensional symmetry and proof of Theorem~\ref{thm.Gibbons}} \label{sec.main}

In this section we provide the proof of Theorem~\ref{thm.Gibbons}.
For this, we indicate the points $x\in\RN$ by 
 $$\text{$(y,t)$, with $y\in\R^{N-1}$ and
 $t\in\R$}.$$
 Moreover, we consider the functional space
 \begin{equation}\label{defix}
  \bbX  := C^3(\RN)\cap W^{4,\infty}(\RN).
  \end{equation}
  We point out that, if $u\in\bbX$, it is possible
  to compute $\LL u$ in the classical sense, that is, $\LL u(x)$
  is well-defined for all $x\in\RN$.

We shall derive Theorem~\ref{thm.Gibbons} from the abstract
  approach developed in~\cite{FarVal}.
To this end, we check that
the assumptions introduced in~\cite{FarVal} are satisfied in our setting.
We list these assumptions here for the convenience of the reader:

  \begin{description}
   \item[(H1)] if $\varphi\in\bbX$
    satisfies $\LL\varphi = f(\varphi)$ in $\RN$, then there exists an 
    operator $\tilde{\LL}$, ac\-ting on a suitable
    space of functions $\tilde{\bbX}\subseteq C(\RN)$ which is
translation-invariant\footnote{A (non-void) set $V\subseteq C(\RN)$ is
\emph{translation-invariant}
    if, for every function $\varphi\in V$ and every point $y\in\RN$, the `tran\-sla\-ted' function
    $x\mapsto \varphi(x+y)$
    belongs to $V$.},
    such that $\de_\nu \varphi\in \tilde{\bbX}$ for any unit vector
    $\nu\in\RN$ and
    $$\tilde{\LL}(\de_\nu\varphi) = f'(\varphi)\,\de_{\nu}\varphi\qquad\text{on $\RN$};$$
    
   \item[(H2)] if $\varphi\in\bbX$ is a solution of \eqref{eq.GibbonsPb},
   if
   $\{z_k\}_{k = 1}^\infty$ is an arbitrary sequence of points
   in $\RN$
   (possibly unbounded) and if
   $$\varphi_k := \varphi(\cdot+z_k) \qquad {\mbox{ for any }} k\in\N,$$
   then there exists a function $\varphi_0\in\bbX$ such that, up to a sub-sequence,
   \begin{eqnarray*}&&
     \lim_{k\to\infty}\varphi_k(x)  = \varphi_0(x), \\
&&    \lim_{k\to\infty}\nabla \varphi_k(x) = \nabla\varphi_0(x) 
\\ \text{and} \quad &&
  \lim_{k\to\infty}\LL\varphi_k(x) = \varphi_0(x),
   \end{eqnarray*}
for all~$x\in\RN$;
   
   \item[(H3)] if $w\in \tilde{\bbX}$ satisfies $\tilde{\LL} w + c(x)w=0$ in $\RN$,
   with 
   $$\text{$w(y,t)\geq 0$ if $|t|\leq M$ and $c(y,t)\geq \kappa$ if $|t|\geq M$}$$
   for some constants $M,\,\kappa > 0$, then 
   $$w(x)\geq 0 \qquad\text{for all $x\in\RN$};$$
   
   \item[(H4)] if $\varphi\in\bbX$ and 
   if $w\in\tilde{\bbX}$ satisfies $\tilde{\LL}w = f'(\varphi)w$ in $\RN$, then
   $$
   \begin{cases}
   \text{$w\geq 0$ in $\RN$}, \\
   w(0) = 0,
   \end{cases}\,\,\Longrightarrow\,\,\text{$w\equiv 0$ on $\RN$};$$
   
   \item[(H5)] given $\mu_-<\mu_+\in\R$, if $U\subseteq \RN$ 
   is an open set contained in 
   $$\mathcal{S} := \{x = (y,t)\in\RN:\,\text{$t\leq\mu_-$ or $t\geq \mu_+$}\}$$
   and if $v \in\bbX$ 
   satisfies $\LL v +c(x)v=0$ in $\RN$, with 
   $$\text{$v(x) \geq $ in $\RN\setminus U$ and $c(x)\geq \kappa$ on $U$}$$
   for some constant $\kappa > 0$, then
	$$v(x)\geq 0\qquad\text{for all $x\in\RN$};$$
   
   \item[(H6)] if $\varphi\in\bbX$ and 
   if $v\in\bbX$ satisfies ${\LL}v = f(v+\varphi)-f(v)$ in $\RN$, then
   $$
   \begin{cases}
   \text{$v\geq 0$ in $\RN$}, \\
   v(0) = 0,
   \end{cases}\,\,\Longrightarrow\,\,\text{$v\equiv 0$ on $\RN$}.$$
  \end{description}
The next lemmata establish
the validity of (H1)---(H6) in our setting.
 \begin{lemma}[Validity of~(H1)] \label{lem.H1}
  For every $\varphi\in\bbX$ and every unit vector $\nu\in\RN$, one has
  \begin{equation} \label{eq.LLcommutedenu}
   \LL(\de_\nu \varphi) = \de_\nu\big(\LL\varphi).
   \end{equation}
  In particular, assumption 
  \emph{(H1)} is fulfilled with the choices
  \begin{equation}\label{deftildel}
  \tilde{\LL} := \LL
  \end{equation} and
  \begin{equation}\label{eq.deftildeX}
   \tilde{\bbX} := C^2(\RN)\cap W^{3,\infty}(\RN).
    \end{equation}
  \end{lemma}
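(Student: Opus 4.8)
The statement to prove is Lemma~\ref{lem.H1}, namely that the mixed operator $\LL = -\Delta + (-\Delta)^s$ commutes with directional derivatives on $\bbX$, so that (H1) holds with $\tilde\LL = \LL$ and $\tilde\bbX = C^2(\RN)\cap W^{3,\infty}(\RN)$. The plan is to treat the two summands of $\LL$ separately. For the local part $-\Delta$, commutation with $\de_\nu$ is the classical fact that partial derivatives commute, valid here because $\varphi\in C^3(\RN)$ guarantees that $\de_\nu\varphi\in C^2(\RN)$ and that all third-order partials exist and are continuous, so $\de_\nu(\Delta\varphi)=\Delta(\de_\nu\varphi)$ pointwise by Schwarz's theorem. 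For the nonlocal part, the goal is to show $(-\Delta)^s(\de_\nu\varphi)=\de_\nu\big((-\Delta)^s\varphi\big)$; I would do this by differentiating under the principal-value integral sign.

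The key technical step is justifying that differentiation passes inside the singular integral defining $(-\Delta)^s$. First I would recall that for $\varphi\in W^{4,\infty}(\RN)$ (in particular $\varphi$ and its derivatives up to order $4$ are bounded, and $\varphi$ is $C^{1,1}$, indeed $C^{3,1}$), the function $(-\Delta)^s\varphi$ is well-defined pointwise and can be written in the symmetrized form
\begin{equation*}
(-\Delta)^s\varphi(x) = -\frac12\int_{\RN}\frac{\varphi(x+z)+\varphi(x-z)-2\varphi(x)}{|z|^{N+2s}}\,dz,
\end{equation*}
where the integrand is now absolutely integrable near $z=0$ thanks to the second-order Taylor bound $|\varphi(x+z)+\varphi(x-z)-2\varphi(x)|\le \|D^2\varphi\|_{L^\infty}|z|^2$, and is controlled by $4\|\varphi\|_{L^\infty}|z|^{-N-2s}$ at infinity. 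Then I would differentiate this expression in $x$ along $\nu$: the difference quotient of the integrand in the $\nu$-direction is, by the mean value theorem applied to $\de_\nu\varphi$, bounded uniformly (for small increments $h$) by $\|D^3\varphi\|_{L^\infty}\min\{|z|^2,\text{const}\}\cdot|z|^{-N-2s}$ near the origin (here the $C^3$, in fact $W^{3,\infty}$, regularity is what is needed) and by $C\|\nabla\varphi\|_{L^\infty}|z|^{-N-2s}$ at infinity, both integrable; hence by dominated convergence one may pass the limit $h\to0$ inside the integral, yielding
\begin{equation*}
\de_\nu\big((-\Delta)^s\varphi\big)(x) = -\frac12\int_{\RN}\frac{\de_\nu\varphi(x+z)+\de_\nu\varphi(x-z)-2\de_\nu\varphi(x)}{|z|^{N+2s}}\,dz = (-\Delta)^s(\de_\nu\varphi)(x).
\end{equation*}
Combining the two parts gives \eqref{eq.LLcommutedenu}.

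Finally, to conclude the verification of (H1) I would check the structural bookkeeping: if $\varphi\in\bbX$ solves $\LL\varphi=f(\varphi)$, then $\de_\nu\varphi\in C^2(\RN)\cap W^{3,\infty}(\RN)=\tilde\bbX$ (one derivative is lost relative to $\bbX$), the space $\tilde\bbX$ is clearly translation-invariant, and by \eqref{eq.LLcommutedenu} together with the chain rule $\de_\nu(f(\varphi))=f'(\varphi)\de_\nu\varphi$ one gets $\tilde\LL(\de_\nu\varphi)=\LL(\de_\nu\varphi)=\de_\nu(\LL\varphi)=\de_\nu(f(\varphi))=f'(\varphi)\de_\nu\varphi$, which is exactly the identity required in (H1). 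I expect the main obstacle to be a clean justification of the differentiation-under-the-integral-sign step — in particular producing a single dominating function valid uniformly over small increments $h$ and over all $x$, splitting the region of integration into $\{|z|\le1\}$ (where one uses the third-order Taylor/mean-value estimate) and $\{|z|>1\}$ (where boundedness of $\varphi$ and $\nabla\varphi$ suffices); everything else is routine.
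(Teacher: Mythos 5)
Your proof is correct, and it differs from the paper's only in one place: for the commutation identity $(-\Delta)^s(\de_\nu\varphi)=\de_\nu\big((-\Delta)^s\varphi\big)$ the paper simply invokes formula~(4.1) of the reference~\cite{FarVal} (which is applicable because $\bbX\subseteq W^{3,\infty}(\RN)$), whereas you re-derive that identity from scratch. Your derivation — passing to the symmetrized second-difference kernel, then differentiating under the integral with a dominating function obtained by splitting into $\{|z|\leq 1\}$ (using the $W^{3,\infty}$ bound so that the second-difference of $\de_\nu\varphi$ is $O(|z|^2)$) and $\{|z|>1\}$ (using boundedness of $\nabla\varphi$) — is exactly the content of the cited formula, so the mathematical substance is the same; you just buy self-containedness at the cost of a few extra lines. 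One small point worth tightening: rather than appealing loosely to ``the mean value theorem applied to $\de_\nu\varphi$'' to bound the difference quotient, it is cleaner to write $\frac{g(x+h\nu,z)-g(x,z)}{h}=\frac1h\int_0^h\frac{\de_\nu\varphi(x+t\nu+z)+\de_\nu\varphi(x+t\nu-z)-2\de_\nu\varphi(x+t\nu)}{|z|^{N+2s}}\,dt$ and then apply the uniform second-difference bound for $\de_\nu\varphi\in W^{3,\infty}$ to the integrand; this gives a single dominating function valid for all small $h$ and all $x$. The local part ($-\Delta$ commutes with $\de_\nu$ on $C^3$), the membership $\de_\nu\varphi\in\tilde\bbX$, the translation invariance of $\tilde\bbX$, and the chain-rule step $\de_\nu(f(\varphi))=f'(\varphi)\de_\nu\varphi$ are all handled exactly as in the paper.
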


  \begin{proof}
    First of all, if~$\bbX$ is as in~\eqref{defix} and~$\tilde{\bbX}$
is as in~\eqref{eq.deftildeX}, we obviously have that,
for every $\varphi\in\bbX$ and every unit vector $\nu\in\RN$,
   $$\de_\nu\varphi\in\tilde{\bbX}\qquad\text{and}\qquad
   -\Delta(\de_\nu\varphi) = \de_\nu(-\Delta \varphi).
   $$
   Moreover, since $\bbX\subseteq W^{3,\infty}(\RN)$, we can
   use formula~(4.1) in~\cite{FarVal}, obtaining that
   $$(-\Delta)^s(\de_\nu \varphi) = \de_\nu\big((-\Delta)^s\varphi).$$
   Gathering together these facts, we obtain~\eqref{eq.LLcommutedenu},
  as desired.
  As a result, with the choices in~\eqref{deftildel}
  and~\eqref{eq.deftildeX}, assumption~(H1) is obviously satisfied.
  \end{proof}
   We point out that the space $\tilde{\bbX}\supseteq\bbX$ is `good' 
   for dealing with~$\LL$. Indeed, since any function 
   $u\in \tilde{\bbX}$
  has bounded derivatives up to second order, 
  we can compute~$\LL u$ pointwise in $\RN$.
  \begin{lemma}[Validity of~(H2)] \label{lem.H2}
   Let  $\tilde{\bbX} $ be as in~\eqref{eq.deftildeX}.
   Let $\varphi\in \tilde{\bbX}$ and~$\{z_k\}_{k = 1}^\infty$
   be a sequence of points in~$\RN$
   \emph{(}possibly unbounded\emph{)}. Let also
   \begin{equation}\label{swryerhrh}
    \varphi_k := \varphi(\cdot+z_k) \quad \text{for any $k\in\N$}.
    \end{equation}
   Then, there exists a function $\varphi_0\in\bbX$ such that, up to a sub-sequence,
   \begin{align}
     &\lim_{k\to\infty}\varphi_k(x) = \varphi_0(x), \label{punto1}\\
     & \lim_{k\to\infty}\nabla \varphi_k(x) = \nabla\varphi_0(x)\label{punto2}\\
     \text{and} \quad & \lim_{k\to\infty}\LL\varphi_k(x) = \LL\varphi_0(x),\label{punto3}
   \end{align}
   for all~$x\in\RN$. In particular, assumption \emph{(H2)} is fulfilled.
  \end{lemma}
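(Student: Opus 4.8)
The plan is to obtain $\varphi_0$ as a locally uniform limit of a subsequence of $\{\varphi_k\}$ by an Arzel\`a--Ascoli/compactness argument, and then to pass the equation $\LL\varphi_k=f(\varphi_k)$ to the limit. Observe first that translation does not change any of the relevant norms: since $\varphi\in\tilde\bbX=C^2(\RN)\cap W^{3,\infty}(\RN)$, each $\varphi_k=\varphi(\cdot+z_k)$ satisfies $\|\varphi_k\|_{W^{3,\infty}(\RN)}=\|\varphi\|_{W^{3,\infty}(\RN)}$, uniformly in $k$. Hence the family $\{\varphi_k\}$, together with their first and second derivatives, is uniformly bounded and (because the next derivative is bounded) uniformly Lipschitz, hence equicontinuous on every compact subset of $\RN$. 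By Arzel\`a--Ascoli and a diagonal argument over an exhaustion of $\RN$ by balls, we extract a subsequence (not relabelled) such that $\varphi_k\to\varphi_0$, $\nabla\varphi_k\to\nabla\varphi_0$ and $D^2\varphi_k\to D^2\varphi_0$ locally uniformly on $\RN$, for some $\varphi_0$ with $\|\varphi_0\|_{W^{3,\infty}(\RN)}\le\|\varphi\|_{W^{3,\infty}(\RN)}$; in particular $\varphi_0\in C^2(\RN)\cap W^{3,\infty}(\RN)$, and \eqref{punto1}--\eqref{punto2} hold (indeed with locally uniform, not merely pointwise, convergence).

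Next I would pass $\LL\varphi_k$ to the limit pointwise. Write $\LL\varphi_k=-\Delta\varphi_k+(-\Delta)^s\varphi_k$. The local part is immediate: $-\Delta\varphi_k(x)\to-\Delta\varphi_0(x)$ for every $x$, by the convergence of second derivatives. For the nonlocal part, fix $x\in\RN$ and split
$$(-\Delta)^s\varphi_k(x)=\int_{|z|\le 1}\frac{\varphi_k(x)-\varphi_k(x+z)+\nabla\varphi_k(x)\cdot z}{|z|^{N+2s}}\,dz+\int_{|z|>1}\frac{\varphi_k(x)-\varphi_k(x+z)}{|z|^{N+2s}}\,dz,$$
where the first integrand has been symmetrized using the principal value so that the odd term $\nabla\varphi_k(x)\cdot z$ may be inserted freely. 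On $\{|z|\le 1\}$ a second-order Taylor estimate gives the pointwise bound $|z|^{-N-2s}\,|\varphi_k(x)-\varphi_k(x+z)+\nabla\varphi_k(x)\cdot z|\le \tfrac12\|D^2\varphi\|_{L^\infty}\,|z|^{2-N-2s}$, which is integrable near $0$; on $\{|z|>1\}$ the integrand is bounded by $2\|\varphi\|_{L^\infty}\,|z|^{-N-2s}$, again integrable. Since these dominating functions are independent of $k$, and since $\varphi_k\to\varphi_0$, $\nabla\varphi_k\to\nabla\varphi_0$ pointwise, dominated convergence yields $(-\Delta)^s\varphi_k(x)\to(-\Delta)^s\varphi_0(x)$. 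Adding the two parts gives \eqref{punto3}.

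Finally, to see that $\varphi_0\in\bbX=C^3(\RN)\cap W^{4,\infty}(\RN)$ — not merely in $\tilde\bbX$ — I would invoke elliptic regularity for $\LL$: in the setting of Theorem~\ref{thm.Gibbons} the datum is $f\in C^1$ and each $\varphi_k$ solves $\LL\varphi_k=f(\varphi_k)$ in $\RN$ with $C^3\cap W^{4,\infty}$ bounds inherited from $\varphi$ (which itself lies in $\bbX$, being a translate of the given solution), so the same bounds pass to $\varphi_0$ by a further diagonal extraction of the third derivatives; alternatively one cites the interior Schauder-type estimates for the mixed operator (e.g.\ \cite{BDVV}, or formula~(4.1) in \cite{FarVal} applied to $\de_\nu\varphi_0$) to bootstrap from $\varphi_0\in C^2\cap W^{3,\infty}$ and $\LL\varphi_0=f(\varphi_0)\in C^1$ to $\varphi_0\in C^3\cap W^{4,\infty}$. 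The main obstacle is the nonlocal limit: one must be careful that the principal-value integral is handled with a $k$-uniform integrable dominating function on both $\{|z|\le1\}$ and $\{|z|>1\}$, so that dominated convergence applies; once the decomposition above is in place this is routine, and \eqref{punto1}--\eqref{punto3} follow, establishing (H2).
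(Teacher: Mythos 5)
Your proof follows essentially the same two-step strategy as the paper: an Arzel\`a--Ascoli extraction for the derivatives up to order two (using translation invariance of the $W^{3,\infty}$ bounds), followed by a dominated-convergence argument with $k$-uniform $L^1$ majorants to pass the fractional Laplacian to the limit. The only substantive difference in the nonlocal step is how the principal value on $\{|z|\le1\}$ is removed: you insert the odd term $\nabla\varphi_k(x)\cdot z$, while the paper works with the symmetrized second difference $\varphi_k(x+z)+\varphi_k(x-z)-2\varphi_k(x)$; both are equivalent and lead to the same second-order Taylor bound $O(|z|^{2-N-2s})$ near the origin and the same $O(|z|^{-N-2s})$ tail bound. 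Your concern about whether $\varphi_0$ lands in $\bbX$ rather than merely $\tilde{\bbX}$ is legitimate: the lemma takes $\varphi\in\tilde{\bbX}$ and asserts $\varphi_0\in\bbX$, yet the paper's own Arzel\`a--Ascoli step is run only for multi-indices $|\alpha|\leq 2$ and does not literally deliver the extra derivative. However, the elliptic-regularity/bootstrap alternative you offer is unnecessary machinery; your first suggestion is the right (and intended) fix: when (H2) is invoked, $\varphi$ is a translate of the solution $u\in\bbX=C^3(\RN)\cap W^{4,\infty}(\RN)$, so one should simply run Arzel\`a--Ascoli through multi-indices $|\alpha|\leq 3$ and note that the uniform Lipschitz bound on the third-order derivatives (coming from $W^{4,\infty}$) passes to the locally uniform limit, giving $\varphi_0\in\bbX$ directly, with no appeal to regularity theory for $\LL$.
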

  \begin{proof}
  We observe that, since $\varphi\in\tilde{\bbX}$,
  the sequences
   $$\{D^\alpha\varphi_k\}_{k = 1}^\infty$$ 
   are equi-continuous
   and equi-bounded on~$\RN$, for every multi-index
   $\alpha\in\N^N$
   sa\-ti\-sfy\-ing $0\leq|\alpha|\leq 2$.
   As a consequence, Arzel\`a-Ascoli's Theorem ensures the e\-xi\-sten\-ce
   of some function~$\varphi_0\in\bbX$ such that (up to a sub-sequence)
   \begin{equation} \label{eq.limitDalpha}
    \text{$
	 \dsy\lim_{k\to\infty}
	 D^\alpha\varphi_k = D^\alpha\varphi_0$
	 \quad locally uniformly in $\RN$},
   \end{equation}
   for every $\alpha\in\N^N$ with $|\alpha|\leq 2$.
   Hence, \eqref{punto1} and~\eqref{punto2} plainly follows from~\eqref{eq.limitDalpha}.
   We also deduce from~\eqref{eq.limitDalpha} that
   \begin{equation} \label{eq.limitDelta}
    \lim_{k\to\infty}\Delta \varphi_k(x) = 
   \Delta\varphi_0(x)\quad\text{locally uniformly in $\RN$}.
   \end{equation}
  We now claim that
   \begin{equation} \label{eq.proveDeltasconverges}
    \lim_{k\to\infty}(-\Delta)^s\varphi_k(x) = (-\Delta)^s \varphi_0(x)
 \qquad\text{for every $x\in\RN$}.
   \end{equation}
  To prove it, for any~$x\in\RN$ and for any~$k\in\N$, we set 
   $$\mathcal{I}_k(z) := 
   \frac{\varphi_k(x+z)-\varphi_k(x-z)-2\varphi_k(x)}{|z|^{N+2s}}
   \qquad {\mbox{ for any }}z\neq 0.$$
On account of \eqref{eq.limitDalpha}, we have that
 \begin{equation}\label{ergerjtrjyi7i76io76}
   \lim_{k\to\infty}\mathcal{I}_k(z) = 
   \frac{\varphi_0(x+z)-\varphi_0(x-z)-2\varphi_0(x)}{|z|^{N+2s}} \qquad\text{
   for all $z\neq 0$}.
   \end{equation}
 Moreover, recalling the definition of $\varphi_k$ in~\eqref{swryerhrh},
 we see that, for every $z\neq 0$,
 \begin{equation}\label{ergerjtrjyi7i76io7622}
\begin{split}
    |\mathcal{I}_k(z)|  =\;&
  \frac{\big|\varphi_k(x+z)+\varphi_k(x-z)|-2\varphi_k(x)\big|}{|z|^{N+2s}}\\  
  & \leq 
    \max_{|\alpha|= 2}\|D^\alpha \varphi_k\|_{L^\infty(\RN)}\,
    \frac{1}{|z|^{N+2s-2}}\,\chi_{\{0<|z|\leq 1\}}
    \\
    & \qquad + 4\|\varphi_k\|_{L^\infty(\RN)}\,\frac{1}{|z|^{N+2s}}\,
    \chi_{\{|z| > 1\}} \\
 & = 
    \max_{|\alpha|= 2}\|D^\alpha \varphi\|_{L^\infty(\RN)}\,
    \frac{1}{|z|^{N+2s-2}}\,\chi_{\{0<|z|\leq 1\}}
    \\
     &\qquad + 4\|\varphi\|_{L^\infty(\RN)}\,\frac{1}{|z|^{N+2s}}\,\chi_{\{|z| > 1\}}.
   \end{split}
   \end{equation}
 Now, since~$\varphi\in\tilde{\bbX}$, we have that
 \begin{align*} 
 g(z) & :=
    \max_{|\alpha|= 2}\|D^\alpha \varphi\|_{L^\infty(\RN)}\,
    \frac{1}{|z|^{N+2s-2}}\,\chi_{\{0<|z|\leq 1\}} \\
 &\qquad + 4\|\varphi\|_{L^\infty(\RN)}\,\frac{1}{|z|^{N+2s}}\,
  \chi_{\{|z| > 1\}}\in L^1(\R^N).
  \end{align*}
 {F}rom this, \eqref{ergerjtrjyi7i76io76} and~\eqref{ergerjtrjyi7i76io7622}
 we deduce that we can apply the Dominated Con\-ver\-gen\-ce Theorem
 to conclude that, for any~$x\in\R^N$,
 \begin{align*}
 & \lim_{k\to\infty}\int_{\R^N}
 \frac{\varphi_k(x+z)-\varphi_k(x-z)-2\varphi_k(x)}{|z|^{N+2s}}\,dz \\
 &\qquad=\int_{\R^N}
 \frac{\varphi_0(x+z)-\varphi_0(x-z)-2\varphi_0(x)}{|z|^{N+2s}}\,dz.
 \end{align*}
 This proves~\eqref{eq.proveDeltasconverges}.
 {F}rom~\eqref{eq.limitDelta} and~\eqref{eq.proveDeltasconverges}, 
 recalling~\eqref{OPER}, we obtain~\eqref{punto3}.
 Finally, since~$\bbX\subset\tilde{\bbX}$, we deduce that
 assumption~{(H2)} is fulfilled,
 thus completing the proof of Lemma~\ref{lem.H2}.
 \end{proof}
 \begin{lemma}[Validity of~(H3) and~(H5)] \label{lem.H35}
 Let~$\tilde{\bbX}$ be as in~\eqref{eq.deftildeX}.
 Let $w \in\tilde{\bbX}$ satisfy 
 \begin{equation}\label{sw45009876qwagdfhtfijgsddwsrb}
  \LL w + c(x)w=0\quad \text{in $\RN$},
 \end{equation} with 
   \begin{equation}\label{swr3t4365749uyti}
   \text{$w(x) \geq0 $ in $\RN\setminus U$ and $c(x)\geq \kappa$ on $U$}
  \end{equation}
  for some open set $U\subseteq\RN$ and some constant $\kappa > 0$. Then
  \begin{equation}\label{THIS}
  w(x)\geq 0\qquad\text{for all $x\in\RN$}.
  \end{equation}
 In particular, assumptions~\emph{(H3)} and~\emph{(H5)}
 are fulfilled with the choice in~\eqref{deftildel}.
  \end{lemma}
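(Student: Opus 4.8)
The plan is to argue by contradiction through a translation-and-compactness scheme, which reduces the claim to a strong minimum principle for $\LL$ at a global minimum point. Assume, to reach a contradiction, that $w$ is somewhere negative, and set
\[
m:=\inf_{\RN} w<0;
\]
this is a finite number, since $w\in\tilde{\bbX}\subseteq L^\infty(\RN)$. Pick a sequence $\{x_k\}_{k\in\N}\subset\RN$ with $w(x_k)\to m$ as $k\to\infty$. Then $w(x_k)\leq\tfrac m2<0$ for all $k$ large, so the hypothesis $w\geq 0$ on $\RN\setminus U$ in~\eqref{swr3t4365749uyti} forces $x_k\in U$, whence $c(x_k)\geq\kappa$ for such~$k$.

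Next I would translate, setting $w_k:=w(\cdot+x_k)$. Since translation is an isometry of $\tilde{\bbX}$, the family $\{w_k\}$ is equi-bounded and equi-continuous together with its derivatives up to second order; hence, running the very argument used in the proof of Lemma~\ref{lem.H2} (Arzel\`a--Ascoli, followed by Dominated Convergence for the nonlocal part), we obtain, along a subsequence, a bounded function $w_0\in C^2(\RN)$ such that $w_k\to w_0$ locally uniformly in $\RN$ together with all derivatives up to second order, and $\LL w_k(x)\to\LL w_0(x)$ for every $x\in\RN$. By construction, $w_0(0)=\lim_k w(x_k)=m$, while $w_0(x)=\lim_k w(x+x_k)\geq m$ for every $x\in\RN$; therefore the origin is a global minimum point of $w_0$, with $w_0(0)=m<0$.

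I would then compare two evaluations of $\LL w_0(0)$. On the one hand, minimality of $w_0$ at the origin makes its Hessian there positive semidefinite, so $-\Delta w_0(0)\leq 0$; moreover, by the (absolutely convergent, since $w_0\in C^2\cap L^\infty$) symmetrized representation
\[
(-\Delta)^s w_0(0)=\frac12\int_{\RN}\frac{2w_0(0)-w_0(z)-w_0(-z)}{|z|^{N+2s}}\,dz\leq 0,
\]
the numerator being nonpositive because $w_0(0)$ is the global minimum. Hence $\LL w_0(0)\leq 0$. On the other hand, using~\eqref{sw45009876qwagdfhtfijgsddwsrb}, the translation invariance of $\LL$, and the bounds $c(x_k)\geq\kappa$ and $w(x_k)\leq\tfrac m2<0$, for $k$ large we get
\[
\LL w_k(0)=-c(x_k)\,w(x_k)\geq \kappa\,\frac{|m|}{2}>0,
\]
so letting $k\to\infty$ yields $\LL w_0(0)\geq \kappa|m|/2>0$, a contradiction. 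This proves~\eqref{THIS}. Finally, assumption~(H3) is the special case $U=\{(y,t):|t|>M\}$, and~(H5) the special case $U\subseteq\mathcal{S}$, of the statement just proved, both with the choice~\eqref{deftildel}.

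The main obstacle, and the reason the translation step cannot be avoided, is that $w$ need not attain its infimum on $\RN$; the compactness of Lemma~\ref{lem.H2} remedies this by producing a limit profile that does attain it. A secondary subtlety is that $c\in L^\infty$ carries no limiting behaviour along $\{x_k\}$: the argument uses only the one-sided bound $c(x_k)\geq\kappa$, which is exactly where the hypothesis $c\geq\kappa$ on $U$ enters.
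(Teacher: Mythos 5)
Your proof is correct and follows essentially the same strategy as the paper: assume $m=\inf_{\RN}w<0$, take a minimizing sequence which by~\eqref{swr3t4365749uyti} must lie in $U$, translate to $w_k=w(\cdot+x_k)$, pass to a subsequential limit $w_0$ via the compactness of Lemma~\ref{lem.H2}, and contradict $\LL w_0(0)\le 0$ (from minimality of $w_0$ at the origin) against $\LL w_0(0)\ge \kappa|m|/2>0$ (from the equation). The only cosmetic difference is that you use the symmetrized integral representation of $(-\Delta)^s w_0(0)$ rather than the principal-value form, which makes the sign argument and the absolute convergence slightly more explicit; the specializations to (H3) and (H5) are identical to those in the paper.
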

  \begin{proof}
   Arguing by contradiction, we suppose that $m := \inf_{\RN}w < 0$, and we choose
   a sequence of points $\{z_k\}_{k = 1}^\infty$ in $\RN$ satisfying
   \begin{equation} \label{eq.limwzkmU} 
   \lim_{k\to\infty} w(z_k) = m.
   \end{equation}
   Since $m < 0$, it is not restrictive to assume that
   \begin{equation} \label{eq.wzknegativeU}
    w(z_k) \leq \frac{m}{2} < 0 \qquad\text{for all $k\in\N$}.
    \end{equation}
   As a consequence, also in light of~\eqref{swr3t4365749uyti}, for
   every $k\in\N$ we have
   \begin{equation} \label{eq.czkpositiveU}
    z_k\in U \qquad\text{and}\qquad c(z_k)\geq \kappa > 0.
    \end{equation}
  Now, thanks to~\eqref{sw45009876qwagdfhtfijgsddwsrb},
   from \eqref{eq.wzknegativeU} and \eqref{eq.czkpositiveU} we deduce that
   $$\LL w(z_k) =- c(z_k)w(z_k) \geq -\frac{m\,\kappa}{2} > 0 ,
  \qquad\text{for all $k\in\N$}.$$
   In particular, setting $w_k:= w(\cdot+z_k)$, we obtain
   \begin{equation} \label{eq.topasslimitU}
    \LL w_k(0) \geq -\frac{m\,\kappa}{2}>0,\qquad\text{for all $k\in\N$}.
   \end{equation}
   On the other hand, since $w\in\tilde{\bbX}$, from Lemma~\ref{lem.H2}
   we infer the existence of some
   function $w_0\in \tilde{\bbX}$ such that (up to a sub-sequence)
  \begin{equation}\label{swgeryreyhjlio403683uigfds}
   \lim_{k\to\infty}w_k(x) = w_0(x)\qquad\text{and}\qquad
   \lim_{k\to\infty}\LL w_k(x) = \LL w_0(x),
   \end{equation}
   for every fixed $x\in\RN$. By taking the limit
   as $k\to\infty$ in \eqref{eq.topasslimitU}, we then get
   \begin{equation} \label{eq.tocontradictU}
    \LL w_0(0) \geq -\frac{m\,\kappa}{2} > 0.
   \end{equation}
  Now, we observe that,
   on account of \eqref{eq.limwzkmU} and~\eqref{swgeryreyhjlio403683uigfds}, one has
   \begin{align*}
    w_0(0) = \lim_{k\to\infty}w_k(0) = \lim_{k\to\infty}
    w(z_k) = m = \inf_{\RN}w \leq w(x+z_k) = w_k(x),
   \end{align*}
   for every $x\in\RN$ and every~$k\in\N$. As a consequence,
   $$w_0(0) \leq w_0(x)\qquad\text{for every $x\in\RN$},$$
   and thus $x = 0$ is a minimum point for $w_0$ in $\RN$. In particular,
   $$\Delta w_0(0)\geq 0 \qquad \text{and} \quad -(-\Delta)^s w_0(0)
   = \mathrm{P.V.}\int_{\RN}\frac{w_0(x)-w_0(0)}{|x|^{N+2s}}\,d x \geq 0.$$
  Therefore, recalling~\eqref{OPER},
  this implies that~$\LL w_0(0)\leq 0$, 
   which is is in con\-tra\-dic\-tion with 
   \eqref{eq.tocontradictU}. This completes the proof of~\eqref{THIS}.

  We point out that, with the choice in~\eqref{deftildel}, from the first part
of Lemma \ref{lem.H35} we obtain the validity of assumption~(H3). 
Indeed, for this, it is enough to apply
the first part
of Lemma \ref{lem.H35} with
 $$U := \big\{x = (y,t)\in\RN \,{\mbox{ s.t. }}\,|t| \ge M\big\} ,$$
for some $M > 0$.
Furthermore, from the first part
of Lemma \ref{lem.H35} we also obtain the validity of assumption~(H5),
by simply
observing that~$\bbX\subset\tilde{\bbX}$.
\end{proof}
  \begin{lemma}[Validity of~(H4) and~(H6)] \label{lem.H46}
Let~$\tilde{\bbX}$ be as in~\eqref{eq.deftildeX}.
   Let $c:\RN\times\R\to\R$ be any function satisfying
   \begin{equation} \label{eq.cvanishzero}
    c(x,0) = 0\qquad\text{for every $x\in\RN$}.
   \end{equation}
Let~$w\in\tilde{\bbX}$ satisfy 
\begin{equation}\label{weiwtyegsdjk}
\LL w + c(x,w)=0\quad {\mbox{ in }}\RN.
\end{equation} Then
\begin{equation}\label{assfyeryuer}
   \begin{cases}
   \text{$w\geq 0$ in $\RN$}, \\
   w(0) = 0,
   \end{cases}\,\,\Longrightarrow\,\,\text{$w\equiv 0$ on $\RN$}.
\end{equation}
 In particular, assumptions~\emph{(H4)} and~\emph{(H6)}
 are fulfilled with the choices in~\eqref{deftildel}
 and~\eqref{eq.deftildeX}.
  \end{lemma}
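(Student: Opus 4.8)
The plan is to establish first the core implication~\eqref{assfyeryuer} by a pointwise maximum-principle argument at the origin, and then to read off (H4) and (H6) by choosing the nonlinearity~$c$ appropriately. So, suppose that $w\in\tilde{\bbX}$ satisfies~\eqref{weiwtyegsdjk} together with $w\geq 0$ in~$\RN$ and $w(0)=0$. Then $x=0$ is a global minimum point of~$w$, and since $w\in\tilde{\bbX}\subset C^{2}(\RN)$ this gives $\nabla w(0)=0$ and $D^{2}w(0)\geq 0$, hence $-\Delta w(0)\leq 0$. Moreover, since $w(0)=0\leq w(y)$ for every~$y$ and since $w(y)=O(|y|^{2})$ as $y\to 0$ (because $w(0)=0$, $\nabla w(0)=0$ and $D^{2}w$ is bounded) while $w$ is bounded at infinity, the function $y\mapsto |y|^{-N-2s}w(y)$ belongs to $L^{1}(\RN)$; therefore the principal value in the definition of $(-\Delta)^{s}$ is superfluous at $x=0$ and
\[
(-\Delta)^{s}w(0)=\int_{\RN}\frac{w(0)-w(y)}{|y|^{N+2s}}\,dy=-\int_{\RN}\frac{w(y)}{|y|^{N+2s}}\,dy\leq 0.
\]

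Next I would combine these two facts with the equation. By~\eqref{weiwtyegsdjk} and the standing hypothesis~\eqref{eq.cvanishzero} we have $\LL w(0)=-c(0,w(0))=-c(0,0)=0$, so that, recalling~\eqref{OPER},
\[
0=\LL w(0)=-\Delta w(0)+(-\Delta)^{s}w(0).
\]
Since the first summand is nonpositive, we deduce $(-\Delta)^{s}w(0)\geq 0$, and combined with the opposite inequality obtained above this forces $(-\Delta)^{s}w(0)=0$, that is, $\int_{\RN}|y|^{-N-2s}w(y)\,dy=0$. Because $w\geq 0$, this yields $w\equiv 0$ almost everywhere, and hence $w\equiv 0$ on~$\RN$ by continuity. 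This proves~\eqref{assfyeryuer}.

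It then remains to deduce (H4) and (H6) from~\eqref{assfyeryuer}. For (H4): if $\varphi\in\bbX$ and $w\in\tilde{\bbX}$ satisfy $\tilde{\LL}w=f'(\varphi)\,w$, then, by~\eqref{deftildel}, $w$ is a solution of~\eqref{weiwtyegsdjk} with $c(x,r):=-f'(\varphi(x))\,r$, which satisfies~\eqref{eq.cvanishzero}; hence~\eqref{assfyeryuer} applies verbatim. For (H6): if $\varphi,v\in\bbX\subset\tilde{\bbX}$ satisfy the corresponding equation, then $v$ is a solution of~\eqref{weiwtyegsdjk} with $c(x,r):=f(\varphi(x))-f(\varphi(x)+r)$, so that $\LL v+c(x,v)=0$ and $c(x,0)=0$, and again~\eqref{assfyeryuer} gives $v\equiv 0$. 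The only genuinely delicate point is the one addressed in the first paragraph: one has to verify that the $C^{2}$ regularity of~$w$, together with $w(0)=0$ and $\nabla w(0)=0$, makes $|y|^{-N-2s}w(y)$ integrable, so that $(-\Delta)^{s}w(0)$ is an absolutely convergent integral with a definite sign; granting this, the argument is the same one-line comparison already used at the end of the proof of Lemma~\ref{lem.H35}, and no further ideas are needed.
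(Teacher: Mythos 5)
Your proof is correct and follows essentially the same approach as the paper: both exploit that $x=0$ is a global minimum of $w$ to get the sign conditions $-\Delta w(0)\le 0$ and $(-\Delta)^s w(0)\le 0$, combine these with $\LL w(0)=-c(0,0)=0$ to force $(-\Delta)^s w(0)=0$, and then use $w\ge 0$ to conclude $w\equiv 0$, before making the same choices of $c$ to read off (H4) and (H6). Your additional observation that $|y|^{-N-2s}w(y)$ is absolutely integrable (using $w(0)=0$, $\nabla w(0)=0$, boundedness of $D^2w$ near the origin, and boundedness of $w$ at infinity), so that the principal value is superfluous at $x=0$, is a correct refinement that the paper leaves implicit.
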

  \begin{proof}
 We observe that, thanks to the assumptions in~\eqref{assfyeryuer},
 $x = 0$ is a minimum point for~$w$ in~$\RN$.
 As a consequence, we have that
 \begin{equation}\label{we098r7654}
 \Delta w(0)\geq 0 \qquad\text{and}\quad 
   -(-\Delta)^s w(0) =
   \mathrm{P.V.}\int_{\RN}\frac{w(x)}{|x|^{N+2s}}\,d x \geq 0.
\end{equation}
 On the other hand, by~\eqref{eq.cvanishzero} and~\eqref{weiwtyegsdjk},
 and recalling also that~$w(0) = 0$, we get
 $$
    0 = c(0,0) = c(0,w(0)) 
    = -\LL w(0) = \Delta w(0)-(-\Delta)^s w(0) \geq -(-\Delta)^s w(0).
 $$
   Gathering together this and~\eqref{we098r7654}, we conclude that
   $$0 = -(-\Delta)^s w(0) = \mathrm{P.V.}\int_{\RN}\frac{w(x)}{|x|^{N+2s}}\,dx.$$
  Since $w\geq 0$ in $\RN$, we deduce that~$w\equiv 0$
  on the whole of~$\RN$, which completes the proof of the 
  claim in~\eqref{assfyeryuer}. \medskip

  Now, we check the validity of assumption~(H4). For this, recalling
  \eqref{deftildel} and~\eqref{eq.deftildeX},
  we take~$\varphi\in\bbX$  and we define
  $$c(x,w) := -f'(\varphi(x))\,w.$$
  We observe that~$c$ satisfies~\eqref{eq.cvanishzero}.
  Hence, we can apply the first part of Lemma~\ref{lem.H46}
  to obtain that~(H4) is satisfied.
  Finally, in order to show the validity of as\-sump\-tion~(H6), given~$\varphi\in\bbX$, 
  we define
   $$c(x,w) := f(\varphi(x)+w)-f(\varphi(x)).$$
  This function satisfies \eqref{eq.cvanishzero}. As a consequence of this
  and of the inclusion $\bbX\subseteq\tilde{\bbX}$, we deduce~(H6)
  from the first part of Lemma~\ref{lem.H46}.
  \end{proof}
 Thanks to these statements, we can now prove Theorem \ref{thm.Gibbons}:
  
\begin{proof}[Proof of Theorem \ref{thm.Gibbons}]
  On account of Lemmata \ref{lem.H1}, \ref{lem.H2}, \ref{lem.H35} and
  \ref{lem.H46}, we know that the assumptions
  in~(H1)---(H6) are fulfilled
  in the setting of Theorem \ref{thm.Gibbons}.
 Moreover, since $u\in\bbX$, we have that
  $$\text{$\|u\|_{C^{1,\beta}(\RN)}$ is finite for all $\beta\in(0,1)$}.$$
 {F}rom these considerations and~\eqref{eq.assumptionf}, we have that
 the assumptions of Theorem~1.1 in~\cite{FarVal} are satisfied. Hence, 
 from Theorem~1.1 in~\cite{FarVal} we have that
 there exists some function $u_0:\R\to\R$ such that~\eqref{eq.onedim} holds true.
  \end{proof}   
	
 \vfill


\begin{thebibliography}{99}

\bibitem{NIC-MAT}
N. Abatangelo, M. Cozzi, {\em
An elliptic boundary value problem with fractional nonlinearity},
preprint.

\bibitem{BBGG-1}
G. Barles, E. Chasseigne, A. Ciomaga, C. Imbert, {\em
Lipschitz regularity of solutions for mixed integro-differential equations},
J. Differential Equations {\textbf{252}} (2012), no. 11, 6012--6060.

\bibitem{BBGG-2}
G. Barles, E. Chasseigne, A. Ciomaga, C. Imbert, {\em
Large time behavior of periodic viscosity solutions for uniformly
parabolic integro-differential equations},
Calc. Var. Partial Differential Equations {\textbf{50}} (2014), no. 1-2, 283--304.

\bibitem{MR2422079} 
G. Barles, C. Imbert, \emph{Second-order elliptic integro-differential
equations: viscosity solutions' theory revisited}, 
Ann. Inst. H. Poincar\'e Anal. Non Lin\'eaire \textbf{25} (2008), no. 3, 567--585.

\bibitem{BAB}
M.\,T. Barlow, R.\,F. Bass, C. Gui, \emph{The Liouville property and a conjecture of De Giorgi},
Comm. Pure Appl. Math.
\textbf{53}(8)  (2000), 1007--1038.

\bibitem{MR3827344} 
B. Barrios, L. Montoro, B. Sciunzi, \emph{On
the moving plane method for nonlocal problems in bounded domains},
J. Anal. Math. \textbf{135}(1) (2018), 37--57. 

\bibitem{BEB}
H. Berestycki, F. Hamel, R. Monneau,
\emph{One-dimensional symmetry of bounded entire solutions of some elliptic equations}, 
Duke Math. J. \textbf{103}(3) (2000), 375--396. 
 
\bibitem{BDVV}
S. Biagi, S. Dipierro, E. Valdinoci, E. Vecchi,
\emph{Mixed local and nonlocal elliptic operators: regularity and maximum principles},
preprint.

\bibitem{BVV}
S. Biagi, E. Valdinoci, E. Vecchi,
\emph{A symmetry result for cooperative elliptic systems with singularities},
to appear in Publ. Math.

\bibitem{OHKSBDc3847t8567}
I.\,H. Biswas, E.\,R. Jakobsen, K.\,H. Karlsen, {\em
Viscosity solutions for a system of integro-PDEs and connections to
optimal switching and control of jump-diffusion processes},
Appl. Math. Optim. \textbf{62} (2010), no. 1, 47--80.

\bibitem{N97564I}
I.\,H. Biswas, E.\,R. Jakobsen, K.\,H. Karlsen, {\em
Difference-quadrature schemes for nonlinear degenerate
parabolic integro-PDE},
SIAM J. Numer. Anal. \textbf{48} (2010), no. 3, 1110--1135.

\bibitem{PLA}
D. Blazevski, D. del-Castillo-Negrete,
{\em Local and nonlocal anisotropic transport in reversed shear magnetic fields:
Shearless Cantori and nondiffusive transport},
Phys. Rev. E {\textbf{87}} (2013), 063106.

\bibitem{BrezisBook}
H. Brezis, 
Functional Analysis, Sobolev Spaces and Partial Differential Equations. U\-ni\-ver\-si\-text. Springer,
New York (2011).

\bibitem{BERN}
X. Cabr\'e, S. Dipierro, E. Valdinoci, {\em
The Bernstein technique for integro-differential equations},
preprint.

\bibitem{CASE}
X. Cabr\'e, J. Serra, {\em
An extension problem for sums of fractional Laplacians and 1-D symmetry of phase transitions}, Nonlinear Anal. {\textbf{137}} (2016), 246--265.

\bibitem{CASI}
X. Cabr\'e, Y. Sire, \emph{Nonlinear equations for fractional Laplacians II: Existence, uniqueness, and qualitative properties of solutions},
Trans. Amer. Math. Soc. \textbf{367}(2)  (2015), 911--941.

\bibitem{6AG} 
L. Caffarelli, E. Valdinoci, {\em
A priori bounds for solutions of a nonlocal evolution PDE},
Analysis and numerics of partial differential equations, 141--163, 
Springer INdAM Ser., 4, Springer, Milan, 2013. 

\bibitem{CIOM}
A. Ciomaga, {\em 
On the strong maximum principle for second-order nonlinear parabolic
integro-differential equations},
Adv. Differential Equations \textbf{17} (2012), no. 7-8, 635--671.

\bibitem{LL}
R. de la Llave, E. Valdinoci, {\em
A generalization of Aubry-Mather theory to partial differential
equations and pseudo-differential equations},
Ann. Inst. H. Poincar\'e Anal. Non Lin\'eaire \textbf{26} (2009), no. 4,
1309--1344.

\bibitem{TESO}
F. del Teso, J. Endal, E.\,R. Jakobsen, {\em On distributional solutions of local 
and non\-lo\-cal pro\-blems of po\-rous medium type}, 
C. R. Math. Acad. Sci. Paris {\textbf{355}} (2017), no. 11, 1154--1160. 

\bibitem{PATA}
F. Dell'Oro, V. Pata, {\em
Second order linear evolution equations with general dissipation},
pre\-print, available at {\tt https://arxiv.org/pdf/1811.07667.pdf}

\bibitem{DMPS}
S. Dipierro, L. Montoro, I. Peral, B. Sciunzi,
\emph{Qualitative properties of positive solutions to nonlocal
critical problems involving the Hardy-Leray potential},
Calc. Var. Part. Differ. Equ. \textbf{55}(4) (2016), Art. 99, 29 pp.

\bibitem{EDO}
S. Dipierro, E. Proietti Lippi, E. Valdinoci, \emph{(Non)local logistic equations with
Neumann con\-di\-tions}, pre\-print.

\bibitem{MR3713542} 
S. Dipierro, N. Soave, E. Valdinoci, \emph{On
frac\-tio\-nal el\-liptic equations in Lipschitz sets 
and epigraphs: regularity, monotonicity and rigidity results},
Math. Ann. \textbf{369}(3-4) (2017), 1283--1326. 

\bibitem{VES}
S. Dipierro, E. Valdinoci, V. Vespri, {\em
Decay estimates for evolutionary equations
with fractional time-diffusion},
J. Evol. Equ. {\textbf{19}} (2019), no. 2, 435--462. 

\bibitem{339}
M.\,M. Fall, S. Jarohs,
\emph{Overdetermined problems with fractional Laplacian},
ESAIM Control Optim. Calc. Var. \textbf{21}(4) (2015), 924--938. 

\bibitem{FAC} 
A. Farina, 
\emph{Symmetry for solutions of semilinear elliptic equations in $\R^N$ and related conjectures}, 
Papers in memory of Ennio De Giorgi. 
Ricerche Mat. \textbf{48} (1999), suppl., 129--154. 

\bibitem{689} 
A. Farina, 
\emph{Propri\'et\'es de monotonie et de
sym\'etrie unidimensionnelle pour les solutions de~$\Delta u+f(u)=0$ avec des fonctions~$ f$ \'eventuellement discontinues}, 
C. R. Acad. Sci. Paris Sér. I Math. \textbf{330}(11) (2000), 973--978.

\bibitem{468} A. Farina, \emph{Monotonicity and one-dimensional symmetry for the 
solutions of $\Delta u+f(u)=0$ in~$\R^N$ with possibly discontinuous nonlinearity},
Adv. Math. Sci. Appl. \textbf{11}(2) (2001), 811--834. 

\bibitem{579} 
A. Farina, E. Valdinoci,
\emph{$1$D symmetry for solutions of semilinear and quasilinear elliptic equations}, 
Trans. Amer. Math. Soc. \textbf{363}(2) (2011), 579--609.

\bibitem{FarVal}
A. Farina, E. Valdinoci,
\emph{Rigidity results for elliptic {PDE}s with uniform limits: an abstract framework with applications}, 
Indiana Univ. Math. J. \textbf{60}(1) (2011), 121--141.
 
\bibitem{GIT}
G.\,W. Gibbons and P.\,K. Townsend, \emph{Bogomol'nyi equation for intersecting domain walls}, 
Phys. Rev. Lett. \textbf{83}(9) (1999), no. 9, 1727--1730. 

\bibitem{JAK1}
E.\,R. Jakobsen, K.\,H. Karlsen, {\em
Continuous dependence estimates for viscosity solutions of integro-PDEs},
J. Differential Equations \textbf{212} (2005), no. 2, 278--318.

\bibitem{JAK2}
E.\,R. Jakobsen, K.\,H. Karlsen, {\em
A ``maximum principle for semicontinuous functions''
applicable to integro-partial differential equations},
NoDEA Nonlinear Differential Equations Appl. \textbf{13} (2006), 137--165. 

\bibitem{JAWE}
S. Jarohs, T. Weth, \emph{Asymptotic symmetry for a class of nonlinear fractional reaction-diffusion equations}, 
Discrete Contin. Dyn. Syst. \textbf{34}(6) (2014), 2581--2615. 

\bibitem{JW}
S. Jarohs, T. Weth,
\emph{Symmetry via antisymmetric maximum principles in nonlocal problems of variable order},
Ann. Mat. Pura Appl. \textbf{195} (2016), 273--291.

\bibitem{LeoniBook}
G. Leoni, A first course in Sobolev spaces.
Graduate Studies in Mathematics, 105. American Mathematical Society, 
Providence, RI, 2009. xvi+607 pp. 

\bibitem{MPS}
L. Montoro, F. Punzo, B. Sciunzi,
\emph{Qualitative properties of singular solutions to nonlocal problems},
Ann. Mat. Pura Appl. \textbf{197} (2018), 941--964.

\bibitem{XAVI}
X. Ros-Oton, J. Serra, 
\emph{Nonexistence results for nonlocal
equations with critical and supercritical nonlinearities},
Comm. Partial Differential Equations \textbf{40} (2015), no. 1, 115--133. 

\bibitem{Sciunzi}
B. Sciunzi, 
\emph{On the moving plane method for singular solutions to semilinear elliptic equations}, 
J. Math. Pures Appl.  \textbf{108}(9) (2017), no. 1, 111--123.

\bibitem{MR3937999} 
N. Soave, E. Valdinoci, 
\emph{Overdetermined problems for the fractional Laplacian in exterior and annular sets}, 
J. Anal. Math. \textbf{137}(1) (2019), 101--134. 

 \end{thebibliography}
\end{document}